\documentclass{article}
\usepackage{graphicx} 
\usepackage[utf8]{inputenc}
\usepackage{amsmath}
\usepackage{amssymb}
\usepackage{textcomp}
\usepackage[english]{babel}
\usepackage{amsthm}
\usepackage{hyperref}
\usepackage{tikz}
\usetikzlibrary{arrows,decorations.pathmorphing,backgrounds,positioning,fit,matrix}
\theoremstyle{definition}
\newtheorem{definition}{Definition}[section]
\newtheorem{remark}{Remark}
\newtheorem{theorem}{Theorem}[section]
\newtheorem{lemma}[theorem]{Lemma}

\newtheorem{claim}[theorem]{Claim}
\newtheorem{outline}[theorem]{Outline}
\newtheorem{fact}[theorem]{Fact}

\newtheorem{notation}[theorem]{Notation}

\newtheorem*{acknowledgements}{Acknowledgements}
\title{Towards Erd\H{o}s-Hajnal property for
dp-minimal 
graphs}
\author{Yayi Fu }
\date{}
\begin{document}
\maketitle
\begin{abstract}
We introduce the notion of strongly $\binom{k}{2}$-free graphs, 
which contain dp-minimal graphs.
We show that under some sparsity assumption, 
given a rainbow $\binom{k}{2}$-free blockade we can find a rainbow $\binom{k-1}{2}$-free blockade. 
This might serve as an intermediate
step towards Erd\H os-Hajnal property for dp-minimal graphs.
\end{abstract} \hspace{10pt}
\section{Introduction}
\indent

\emph{Erd\H{o}s-Hajnal conjecture} \cite{erdos1989ramsey} says for any graph $H$ there is $\epsilon>0$ such that if a graph $G$ does not contain any induced subgraph isomorphic to $H$ then $G$ has a clique or an anti-clique of size $\geq |G|^\epsilon$.
More generally, we say a family of finite graphs has the \emph{Erd\H{o}s-Hajnal property} if there is $\epsilon>0$ such that for any graph $G$ in the family, $G$ has a clique or an anti-clique of size $\geq|G|^\epsilon$. 
Malliaris and Shelah proved in \cite{malliaris2014regularity} that the family of stable graphs has the Erd\H{o}s-Hajnal property. 
Chernikov and Starchenko gave another proof for stable graphs in \cite{chernikov2018note} and in \cite{chernikov2018regularity} they proved that the family of distal graphs has the strong Erd\H{o}s-Hajnal property. 
In general, we are interested in whether the family of finite VC-dimension (i.e. NIP \cite{simon2015guide}) graphs, which contains both stable graphs and distal graphs, has the Erd\H{o}s-Hajnal property. 
Motivation for studying this problem was given in \cite{fox2019erdHos}, which also gave a lower bound $e^{(\log n)^{1-o(1)}}$ for largest clique or anti-clique in a graph with bounded VC dimension. 
In this paper, we consider graphs in dp-minimal theories, a special case of NIP graphs.
\\
\indent 
This paper is inspired by \cite{chudnovsky2021erdos}. 
Techniques used also come from
\cite{chudnovsky2021erdos}.
\\
\indent
We will prove the following lemma: 
\begin{lemma}
    (main lemma)
Given $k\in\mathbb{N},
d\in\mathbb{R}$ with 
$k\geq 2$, $d\geq 2$, 
there exists $\tau_0=\tau_0(k,d)$, 
$L_0=L_0(k,d)$ satisfying the following:
\\
\indent
Let $\tau<\tau_0$, $G$ a
strongly $\binom{k}{2}$-free
$\tau$-critical graph,
and $\mathcal{A}=(A_i:1\leq i\leq t)\subseteq G$
an equicardinal blockade of width $\frac{|G|}{t^d}$ with $\frac{|G|}{t^{2d}}\leq W_G$,
of length 
$L_0 \leq t\leq 
2|G|^{\frac{1}{d}}$ such that for all $a\in A$,
$|E(a,A)|<
\frac{|G|}{t^{d}}$.
\\
\indent
Then there exist
$b\in A$,
an $(t', \frac{|G|}{t^{2d+2}})$-comb $((a_j,A_j'):1\leq j\leq t')$ in
\\
$(E_b\cap A,
\neg E_b\cap A)$ such that $\mathcal{A}'=(A_j':1\leq j\leq t')$ is an equicardinal minor of $\mathcal{A}$ with width $\geq
\frac{|G|}{t^{2d+2}}$, 
length $\geq
t^{\frac{1}{8}}$.
\end{lemma}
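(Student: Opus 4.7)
The plan is to exhibit the required vertex $b$ and the desired comb through three successive reductions: first locate a vertex $b\in A$ whose neighborhood cuts sufficiently many blocks of $\mathcal{A}$ into pieces of comparable size; second, harvest from each such block a pair $(a_j,A_j')$ with $a_j\in E_b\cap A_j$ and $A_j'\subseteq\neg E_b\cap A_j$; and third, apply the strongly $\binom{k}{2}$-free hypothesis to extract a sub-family whose pairs actually form a comb.

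\textbf{Step 1 (choosing $b$).} By a double-counting argument using $\tau$-criticality, which lower-bounds the edge density in any large subset of $G$, together with the sparsity hypothesis $|E(a,A)|<|G|/t^{d}$, which upper-bounds the neighborhood of any single vertex, I would find $b\in A$ such that at least some polynomial fraction of the blocks $A_i$ satisfy both $|A_i\cap E_b|$ and $|A_i\cap\neg E_b|$ being at least $|G|/t^{2d+2}$. The precise thresholds should be engineered so that the final width bound $|G|/t^{2d+2}$ in the statement comes out on the nose, and it is here that the constants $L_0$ and $\tau_0$ get consumed.

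\textbf{Step 2 (harvesting candidates).} For each of these good blocks $A_{i_1},\ldots,A_{i_m}$, I would pick any $a_{i_\ell}\in A_{i_\ell}\cap E_b$ and any $A_{i_\ell}'\subseteq A_{i_\ell}\cap\neg E_b$ of cardinality exactly $|G|/t^{2d+2}$. After relabeling, $(A_{i_\ell}':1\leq\ell\leq m)$ is an equicardinal minor of $\mathcal{A}$ with the desired width, and every pair $(a_{i_\ell},A_{i_\ell}')$ sits inside $(E_b\cap A,\,\neg E_b\cap A)$ by construction. At this stage the only property still missing is the ordering/adjacency rigidity that separates a comb from a mere indexed family of pairs.

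\textbf{Step 3 (pruning to a comb).} A comb requires a rigid adjacency pattern between the spine vertices $a_{i_\ell}$ and the teeth $A_{i_{\ell'}}'$. This is where the strongly $\binom{k}{2}$-free hypothesis enters quantitatively: it forbids, on any sufficiently large sub-family, an unbounded monochromatic family of the forbidden crossing pattern, so a Ramsey-type extraction along the lines of the arguments in \cite{chudnovsky2021erdos} yields a sub-family of size at least $m^{1/4}\geq t^{1/8}$ whose pairs form the required comb, with no further loss in width.

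\textbf{Expected main obstacle.} The delicate point is Step 3 and in particular the precise exponent $t^{1/8}$: the strongly $\binom{k}{2}$-free property must be used quantitatively, and the interplay between the lower bound on edges coming from $\tau$-criticality and the upper bound coming from the sparsity assumption has to leave enough slack for the Ramsey-type extraction to succeed without spoiling the width. I expect the bookkeeping to closely follow the template of \cite{chudnovsky2021erdos}, with the dp-minimal specialization used to bound the complexity of the forbidden patterns that can arise on the spine.
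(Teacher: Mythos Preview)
Your plan has a structural gap at Steps~2--3 that no amount of bookkeeping will close. In Step~2 you choose $a_{i_\ell}\in A_{i_\ell}\cap E_b$ and $A_{i_\ell}'\subseteq A_{i_\ell}\cap\neg E_b$ \emph{arbitrarily}; the only adjacency you have engineered is between $b$ and each $a_{i_\ell}$. For a comb you need $a_{i_\ell}$ complete to its own tooth $A_{i_\ell}'$ and anticomplete to every other tooth, and nothing in your construction secures either condition---indeed $a_{i_\ell}$ may have no neighbour whatsoever in $A_{i_\ell}'$. This defect is pointwise, not a colouring on pairs of indices, so no Ramsey-type subsampling of $\{i_1,\dots,i_m\}$ can repair it. The strongly $\binom{k}{2}$-free hypothesis is also the wrong tool here: it forbids a configuration of $k$ vertices with $\binom{k}{2}$ common-neighbour witnesses, which says nothing about forcing completeness between a prospective spine vertex and a prospective tooth. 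In fact the paper's proof of this lemma never invokes strong $\binom{k}{2}$-freeness; that hypothesis is consumed only afterwards, via Claim~\ref{keyob}, once the comb already exists.

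The paper's argument is organised quite differently. It proceeds by contradiction: assuming no comb as in the statement exists, it iteratively peels $A$ by picking $a_u$ of maximum degree in the current residue $R_u$ and setting $R_{u+1}$ to be the non-neighbours of $a_u$ in $R_u$ that also have no neighbour in $E(a_u,R_u)$. At each step the comb lemma (Fact~\ref{comblem}, from \cite{chudnovsky2021erdos}) is run greedily on the bipartition $(E(a_u,R_u),\neg E(a_u,R_u))$; the assumed failure of the conclusion together with the hypothesis $\frac{|G|}{t^{2d}}\le W_G$ bounds the number of vertices discarded. After $\lceil t^{1/8}\rceil$ rounds the residue is still of size at least $\frac{|G|}{t^d}\cdot t^{1/2}$, and one lands in one of two cases: either the neighbourhoods $E(a_u,R_u)$ form a long pairwise-anticomplete blockade, yielding a cograph of size $>|G|^\tau$ and contradicting $\tau$-criticality, or the residue is so sparse that the estimate of \cite[3.1]{chudnovsky2021erdos} forces $1<1$. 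The vertex $b$ of the statement is one of the $a_u$, and the comb is \emph{constructed} inside $(E(a_u,R_u),\neg E(a_u,R_u))$ by the greedy procedure, not extracted from pre-selected block-by-block pairs.
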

Section \ref{prelim} gives the definition of dp-minimality and 
combinatorial notions we need for the proof.
Section \ref{proof} proves the lemma. 
\begin{acknowledgements}
The author is grateful to her advisor Sergei Starchenko for helpful suggestions.
Also many thanks to Istv\'an Tomon and 
Alex Scott for pointing out mistakes and giving comments.
\end{acknowledgements}
\section{Preliminaries}\label{prelim}
\indent

As usual, a \textit{graph} is a pair $G=(V,E)$ where $V$ is a finite set, $E$ is a binary symmetric 
anti-reflexive relation on $V$.
\begin{definition}\cite[Definition~2.1]{https://doi.org/10.48550/arxiv.0910.3189}
Fix a structure $\mathcal{M}$. An \emph{ICT pattern} in $\mathcal{M}$ consists of a pair of formulas $\varphi(x,\Bar{y})$ and $\psi(x,\Bar{y})$ and sequences $\{\Bar{a}_i :i\in\omega\}$ and $\{\Bar{b}_i :i\in\omega\}$ from $\mathcal{M}$ so that, for all $i, j \in\omega$, the following is consistent:
$\varphi(x,\Bar{a}_i)\wedge\psi(x,\Bar{b}_j)\underset{l\neq i}{\bigwedge}\neg\varphi(x,\Bar{a}_l)\underset{k\neq j}{\bigwedge}\neg\psi(x,\Bar{b}_k)$.
\end{definition}
\begin{remark}    
By compactness, for a pair of formulas $\varphi$, $\psi$, if for all $n\in \omega$, there exist $\{\Bar{a}_i :i\in n\}$, $\{\Bar{b}_i :i\in n\}$ such that for all $i,j\in n$, $\varphi(x,\Bar{a}_i)\wedge\psi(x,\Bar{b}_j)\underset{l\neq i}{\bigwedge}\neg\varphi(x,\Bar{a}_l)\underset{k\neq j}{\bigwedge}\neg\psi(x,\Bar{b}_k)$ is consistent, then there is an ICT pattern in $\mathcal{M}$ if $\mathcal{M}$ is sufficiently saturated.
\end{remark}
\begin{definition}\cite[Definition~2.3]{https://doi.org/10.48550/arxiv.0910.3189}
    A theory $T$ is said to be \emph{dp-minimal} if there is no ICT pattern in any model $\mathcal{M}\models T$.
\end{definition}
\begin{definition}
The family $\mathcal{F}$ of cographs is the smallest family of graphs
satisfying the following:
\begin{enumerate}
    \item if $G$ is a graph with a single vertex, 
    then $G\in \mathcal{F}$;
    \item if $G$ is in $\mathcal{F}$,
    then $\overline{G}$ is in 
    $\mathcal{F}$;
    \item if $G_1=(V_1,E_1)$
    and $G_2=(V_2,E_2)$
    are in $\mathcal{F}$, 
    then the graph $G=(V,E)$
with $V=V_1\dot\cup V_2$
and $E=E_1\cup E_2$ is in $\mathcal{F}$.
 \end{enumerate}
\indent

Any such $G\in\mathcal{F}$
is called a \emph{cograph}.
\end{definition}
As a corollary,
we also have if $G_1=(V_1,E_1)$
    and $G_2=(V_2,E_2)$
    are cographs, 
    then the graph $G=(V,E)$
with $V=V_1\dot\cup V_2$
and $E=E_1\cup E_2
\cup\{(x,y):x\in V_1, y\in V_2\}$ is a cograph, 
because this is the complement of the disjoint union 
of $\overline{G_1}$ and $\overline{G_2}$.
\\
\indent
Also, it is well-known that every cograph $G$ 
contains a homogeneous set of size $\geq |G|^{\frac{1}{2}}$.
\begin{definition}
    We say that a graph $G$ is \emph{$\tau$-critical} if the largest size of a cograph in $G$ is $<|G|^{\tau}$, and for every induced subgraph $G'$ of $G$ with $G'\neq G$, there is a cograph in $G'$ of size $\geq |G'|^{\tau}$ .
\end{definition}
\begin{definition}\cite[Section~5]{chudnovsky2021erdos}
    Let $G$ be a graph.  A \emph{blockade} $\mathcal{B}$ in $G$ is a sequence $(B_1,..., B_t)$ of pairwise disjoint
subsets of $V (G)$ called \emph{blocks}. We denote $B_1\cup ...\cup B_t$ by $B$. The length of a blockade is the number of blocks,
and its width is the minimum cardinality of a block.\\
\indent
A \emph{pure pair} in $G$ is a pair $A$, $B$ of disjoint subsets of $V(G)$ such that $A$ is either
complete or anticomplete to $B$.
A blockade $\mathcal{B} = (B_1, ... , B_t)$ in $G$ is \emph{pure} if $(B_i
, B_j )$ is a pure pair for all $i, j$ with $1\leq i < j \leq t$.
\\
\indent
For a pure blockade $\mathcal{B}=(B_1,...,B_t)$,
let $P$ be the graph with vertex set $\{1,..., t\}$, in which $i, j$ are adjacent if $B_i$
is complete to $B_j$. We say $P$ is the \emph{pattern} of the pure blockade $\mathcal{B}$.
\end{definition}
\begin{definition}\cite[Section~2]{chudnovsky2020pure}
    If $\mathcal{B} = (B_i: i \in I)$ is a blockade, let $I'\subseteq I$; then $(B_i: i \in I')$ is a blockade, of smaller length but of at least the same width, and we call it a \emph{sub-blockade} of $\mathcal{B}$. Second, for each $i \in I$ let $B_i'\subseteq B_i$ be nonempty; then the sequence $(B_i': i \in I)$ is a blockade, of the same length but possibly of smaller width, and we call it a \emph{contraction of $\mathcal{B}$}. A contraction of a sub-blockade (or equivalently, a sub-blockade of a contraction) we call a \emph{minor} of $\mathcal{B}$.
\\
\indent
\cite[Section~3]{chudnovsky2020pure}
    Say a blockade is \emph{equicardinal} if all its blocks have the same cardinality. 
\end{definition}
\begin{definition}
    Let $G=(V,E)$ be a graph,
          $X\subseteq V$, 
          $k\in\mathbb{N}$ and 
          $a_1,...,a_k\in V$.
          We say 
          \emph{$a_1,...,a_k$ has
          $\binom{k}{2}$-property 
          over $X$} if there exists 
          $\{b_{ij}\}_{
          1\leq i<j\leq s}\subseteq X$ such that 
          for each pair $i\neq 
          j,1\leq i<j\leq s$, 
          $E(b_{ij},a_i)\wedge 
E(b_{ij},a_j)\wedge\underset{m\neq i,m\neq j}{\bigwedge}\neg E(b_{ij},a_m)$.
    \\
    \indent
    Given a blockade $\mathcal{B}=(B_i:1\leq i\leq t)$ in $G$, and $a_1,...,a_s\in V$,
    we say $(a_1,...a_s)$ is a \emph{$\mathcal{B}$-rainbow tuple} if for any $i\neq j$,
    there exist $i'\neq j'$ such that $a_i\in B_{i'}$ and $a_j\in B_{j'}$.
    \\
    \indent
    For $k\geq 2$, we say $G$ is \emph{$\binom{k}{2}$-free} if there is no $k$-tuple $(a_1,...,a_k)$ of distinct vertices in $G$ with 
    $\binom{k}{2}$-property over $G$.
     $G$ is \emph{strongly
          $\binom{k}{2}$-free}
          if both $G$ and $\overline{G}$
          are $\binom{k}{2}$-free.
    \\
    \indent
    Given a blockade $\mathcal{B}=(B_i:1\leq i\leq t)$ in $G$, we say $\mathcal{B}$ is \emph{rainbow 
    $\binom{k}{2}$-free} if there is no $\mathcal{B}$-rainbow tuple $(a_1,...,a_k)$ with $\binom{k}{2}$-property over $B$.
\end{definition}
\begin{definition}
\cite[Section~2]{chudnovsky2021erdos}
    Let $G$ be a graph, and let
    $t, k \geq 0$ where $t$ is an integer. We say $
    ((a_i, B_i) : 1 \leq i \leq t)$ is a \emph{$(t, k)$-comb} in $G$ if:
    \begin{itemize}
        \item 
 $a_1,..., a_t \in V (G)$ are distinct, and $B_1,..., B_t$ are pairwise disjoint subsets of 
 $V(G)\setminus\{a_1,..., a_t\}$;
\item 
for $1 \leq i \leq t$, $a_i$
is adjacent to every vertex in $B_i$;
\item 
for $i, j \in \{1,..., t\}$ with $i\neq j$, $a_i$ has no neighbour in $B_j$ ; and
\item 
$B_1,..., B_t$ all have cardinality at least $k$.
\end{itemize}
If $C, D \subseteq V (G)$ are disjoint and $a_1,..., a_t \in C$, and $B_1,..., B_t \subseteq D$, we call this a \emph{$(t, k)$-comb in
$(C, D)$}. 
\end{definition}
The following fact originated from \cite{tomon2023string} and was stated in \cite{PACH202121}.
It was also adopted in \cite{chudnovsky2021erdos}. 
We use here the version in \cite[2.1]{chudnovsky2021erdos}.
\begin{fact}\label{comblem}
    Let $G$ be a graph with a bipartition $(A,B)$, such that every vertex in $B$ has a neighbour in $A$; and let $\Gamma,\Delta,d > 0$ with $d < 1$, such that every vertex in $A$ has at most $\Delta$ neighbours in $B$. Then either:
    \begin{itemize}
        \item 
    for some integer $t \geq 1$, there is a $(t, \Gamma t ^{-1/d})$-comb in $(A, B)$; or 
    \item $|B| \leq \frac{3^{d+1}}{3/2-(3/2)^d} \Gamma^{d}\Delta^{1-d}$.
    \end{itemize}
\end{fact}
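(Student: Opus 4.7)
The plan is to apply the comb lemma (Fact~\ref{comblem}) to an appropriate bipartite graph induced on $A$, and then promote the resulting comb to a minor of $\mathcal{A}$ of the required length and width.

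\textbf{Step 1: Selecting $b$.} For each $b \in A$ set $N_b := E_b \cap A$ and $\overline{N}_b := (\neg E_b) \cap A$, and let $\overline{N}_b^{\ast} := \{x \in \overline{N}_b : x \text{ has a neighbour in } N_b\}$. By the sparsity hypothesis $|N_b| < |G|/t^d$, while $|A| = t \cdot |G|/t^d = |G|/t^{d-1}$, so $|\overline{N}_b| \geq |G|/(2t^{d-1})$ once $t \geq L_0(k,d)$. I would seek a $b$ with $|\overline{N}_b^{\ast}|$ comparable to $|\overline{N}_b|$. Existence is argued by contradiction: if for every $b$ the set $\overline{N}_b^{\ast}$ is small, then $\{b\} \cup N_b$ is anticomplete to the large set $\overline{N}_b \setminus \overline{N}_b^{\ast}$ inside $A$. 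Iterating this dichotomy across vertices of $A$ in the spirit of the cograph construction (take a vertex, split into its neighbourhood and the anticomplete part, recurse) yields a cograph in $G$ of size $>|G|^{\tau}$, contradicting $\tau$-criticality once $\tau_0(k,d)$ is taken small enough. The strongly $\binom{k}{2}$-free hypothesis is used here to bound how long this recursion can produce anticomplete splits before some $\binom{k}{2}$-pattern is forced, fixing the dependence on $k$.

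\textbf{Step 2: Invoking the comb lemma.} With $b$ fixed, apply Fact~\ref{comblem} to the bipartite graph on $(N_b, \overline{N}_b^{\ast})$. Every vertex in $\overline{N}_b^{\ast}$ has a neighbour in $N_b$ by construction, and every vertex in $N_b$ has at most $\Delta := |G|/t^d$ neighbours in $\overline{N}_b^{\ast}$ by sparsity. Set the parameter of Fact~\ref{comblem} to $\delta = 1/2$ and choose $\Gamma$ of the form $c\,|G|\,t^{-(2d+7/4)}$ so that $C\,\Gamma^{\delta}\Delta^{1-\delta} < |\overline{N}_b^{\ast}|$; this forbids the "small $B$" alternative and produces a $(s,\,\Gamma s^{-1/\delta})$-comb in $(N_b, \overline{N}_b^{\ast})$ for some integer $s\geq 1$. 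Tracking exponents, one checks that for such $\Gamma$ the comb can be arranged to have simultaneously $s \geq t^{1/8}$ and bristle cardinality $\geq |G|/t^{2d+2}$, possibly after discarding teeth or trimming bristles to the uniform bound.

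\textbf{Step 3: Promoting the comb to a minor of $\mathcal{A}$.} The bristles $C_j$ produced in Step~2 are arbitrary subsets of $\neg E_b \cap A$; they must be refined to subsets $A_j'$ of distinct blocks $A_{i(j)}$ of $\mathcal{A}$, of cardinality $\geq |G|/t^{2d+2}$. By pigeonhole, each $C_j$ meets some block $A_{i(j)}$ in at least $|C_j|/t$ vertices; replace $C_j$ by $A_j' := C_j \cap A_{i(j)}$. The indices $i(j)$ need not be distinct, but a greedy argument (iteratively keep one tooth per block) produces a sub-comb of size $\geq s / \max_i |\{j : i(j) = i\}|$. Since $s \geq t^{1/8}$ and there are $t$ blocks, elementary counting together with a slight over-budgeting of $\Gamma$ in Step~2 yields a sub-comb of length $\geq t^{1/8}$ with pairwise distinct $i(j)$. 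Truncating to a common block-size $\geq |G|/t^{2d+2}$ gives the desired equicardinal minor.

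\textbf{Main obstacle.} The hardest ingredient is Step~1: producing $b$ with $|\overline{N}_b^{\ast}|$ large, since the blockade and sparsity hypotheses alone do not force any connectivity between $N_b$ and $\overline{N}_b$. Making the contradiction in Step~1 quantitative — showing that failure for all $b$ yields a cograph of size exceeding $|G|^{\tau}$ — is what pins down $\tau_0(k,d)$ and injects the parameter $k$ into the statement. Steps~2 and~3 are then bookkeeping exercises to verify that the loss factors ($t$ in the pigeonhole, constants from Fact~\ref{comblem}) fit inside the stated exponents $2d+2$ on the width and $1/8$ on the length.
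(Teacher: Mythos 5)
Your proposal does not address the statement you were asked to prove. Fact~\ref{comblem} is a self-contained combinatorial statement about bipartite graphs with bounded degree, imported verbatim from \cite{chudnovsky2021erdos} (and ultimately from \cite{tomon2023string}); the present paper simply cites it and gives no proof of its own. What you have written is instead an outline of a proof of the paper's main result, Lemma~\ref{keylem}, which is a different statement and which invokes Fact~\ref{comblem} as a black box. Indeed, your Step~2 explicitly applies Fact~\ref{comblem}, so read as a proof of Fact~\ref{comblem} the argument is circular.

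Beyond the mismatch of target, observe that the hypotheses and parameters on which your three steps rely --- $\tau$-criticality, strong $\binom{k}{2}$-freeness, the blockade $\mathcal{A}$, the quantities $t$, $\tau_0(k,d)$, $L_0(k,d)$, $W_G$ --- do not appear in the statement of Fact~\ref{comblem}, which concerns an arbitrary bipartite graph $(A,B)$ with a degree bound $\Delta$ and a free parameter $\Gamma$; none of these can legitimately enter a proof of the Fact. A genuine proof proceeds along the lines of the greedy comb-building procedure that the paper later redeploys inside its proof of Lemma~\ref{keylem}: in rounds $s=1,2,\dots$, greedily select a maximal family $a^s_1,\dots,a^s_{k_s}\in A$ each having at least $(2/3)^s\Delta$ neighbours in $B$ disjoint from the neighbourhoods of previously selected vertices, and argue that either at some round a large enough such family exists to assemble a $(t,\Gamma t^{-1/d})$-comb, or the geometric decay of $(2/3)^s\Delta$ combined with an upper bound on each $k_s$ of order $(|B|/((2/3)^s\Delta))^{1/2}$ forces $|B|\le \frac{3^{d+1}}{3/2-(3/2)^d}\Gamma^d\Delta^{1-d}$.
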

\begin{notation}
    Let $G$ be a graph and $X\subseteq V(G)$.
    Then $G[X]$ denotes the induced subgraph of $G$ on the subset $X$ and 
    $\overline{G}[X]$ denotes the complement of $G[X]$.
\end{notation}
\begin{fact}\cite[4.3]{chudnovsky2021erdos} \label{rodl} For every graph $H$ and all $\epsilon > 0$, there exists $\delta > 0$ such that for every $H$-free graph $G$, there
exists $X \subseteq V (G)$ with $|X|\geq\delta|G|$, such that one of $G[X]$, $\overline{G}[X]$ has maximum degree at most $\epsilon\delta|G|$.
\end{fact}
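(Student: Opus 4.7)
\emph{Phase 1 (finding a hub $b$).} The proof proceeds in three phases. First I locate $b\in A$ whose degree inside $A$ is not too small. The sparsity hypothesis bounds all $A$-degrees from above by $|G|/t^d$, and a complementary lower bound for at least one vertex comes from $\tau$-criticality: if every $a\in A$ had fewer than $\delta$ neighbors in $A$, then $G[A]$ would have maximum degree $\leq\delta$, hence an independent set (a cograph) of size $\geq|A|/(\delta+1)$, forcing $|A|/(\delta+1)<|G|^{\tau}$. Since $|A|=|G|/t^{d-1}$ is polynomially large in $|G|$, choosing $\tau_0(k,d)$ small enough and $L_0(k,d)$ large enough yields a $b\in A$ with $|E_b\cap A|$ polynomially close to $|A|/|G|^{\tau}$.

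\emph{Phase 2 (applying Fact \ref{comblem}).} Set $P:=E_b\cap A$ and $N:=\neg E_b\cap A$; by sparsity $|P|<|G|/t^d$, so $|N|\geq|A|/2$. Discard from $N$ the vertices with no neighbor in $P$ to form $N'$. The discarded set is anticomplete to $\{b\}\cup P$, and $G[N\setminus N']$ still has maximum degree $<|G|/t^d$ by sparsity, so it contains an independent set of size $\geq|N\setminus N'|t^d/|G|$; joined with $b$ this is an independent set in $G$, so $\tau$-criticality keeps $|N'|$ polynomially close to $|A|$. Apply Fact \ref{comblem} to the bipartition $(P,N')$ with $\Delta=|G|/t^d$, choosing $\Gamma$ and the exponent $d'<1$ so that the lower bound on $|N'|$ contradicts the alternative $|N'|\leq C\Gamma^{d'}\Delta^{1-d'}$. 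The first alternative then produces a $(t^*,\sigma^*)$-comb in $(E_b\cap A,\neg E_b\cap A)$ whose parameters $t^*$ and $\sigma^*$ exceed the eventual targets $t^{1/8}$ and $|G|/t^{2d+2}$ by enough slack for the refinement phase.

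\emph{Phase 3 (refining to an equicardinal minor).} The bristles $B_j^*$ from Phase 2 must be pushed into distinct blocks of $\mathcal{A}$. For each tooth index $j$, pigeonhole over $[t]$ gives an index $i_j\in[t]$ with $|B_j^*\cap A_{i_j}|\geq\sigma^*/t$. If several teeth share a common block index $i$, the corresponding intersections $B_j^*\cap A_i$ are pairwise disjoint subsets of a single block of width $|G|/t^d$, so at most $t|A_i|/\sigma^*=|G|/(t^{d-1}\sigma^*)$ teeth can collide at any single index; hence the number of distinct indices realized is at least $t^*\sigma^*t^{d-1}/|G|$, engineered to exceed $t^{1/8}$. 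Keep one tooth per distinct index, restrict each bristle to its intersection with the chosen block, and uniformly truncate to obtain the equicardinal minor $\mathcal{A}'$ of width $\geq|G|/t^{2d+2}$.

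\emph{Main obstacle.} The delicate part is the simultaneous calibration of exponents in Phases 2 and 3. Pigeonholing each bristle into a single block costs a factor of $t$ in width, and extracting distinct block indices costs a further polynomial factor in length, so Phase 2 must output a comb that is polynomially more generous than the final target $(t^{1/8},|G|/t^{2d+2})$ to survive both losses. Achieving this requires balancing $\Gamma$ and $d'$ in Fact \ref{comblem} against the $\tau$-critical lower bound on $|N'|$ and the sparsity upper bound on $\Delta$, and it is this balance that pins down the admissible range of $\tau_0(k,d)$ and $L_0(k,d)$; the strongly $\binom{k}{2}$-free hypothesis should enter here, restricting the local structures that could otherwise force Phase 1 to pick a $b$ with $|E_b\cap A|$ too close to $|A|$ and so drive $|N|$ below the threshold the comb lemma needs.
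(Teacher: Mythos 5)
Your proposal does not address the statement at all. Fact~\ref{rodl} is R\"odl's theorem: for every graph $H$ and every $\epsilon>0$ there is $\delta>0$ such that every $H$-free graph $G$ contains a set $X$ of at least $\delta|G|$ vertices on which either $G$ or its complement has maximum degree at most $\epsilon\delta|G|$. The only objects in that statement are a forbidden graph $H$, a density threshold $\epsilon$, an $H$-free graph $G$, and the subset $X$. Your three phases, by contrast, are manipulating the hub vertex $b$, the blockade $\mathcal{A}=(A_i)$, combs in $(E_b\cap A,\neg E_b\cap A)$, the quantities $\tau_0(k,d)$, $L_0(k,d)$, $W_G$, sparsity bounds $|E(a,A)|<|G|/t^d$, and the strongly $\binom{k}{2}$-free hypothesis. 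None of these appear in Fact~\ref{rodl}; they belong to the main Lemma~\ref{keylem}. You have written a (rough) sketch of the paper's main lemma rather than a proof of the cited fact.

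Moreover, the paper does not prove Fact~\ref{rodl} at all --- it is quoted as \cite[4.3]{chudnovsky2021erdos} and used as a black box, as is standard for R\"odl's theorem (whose usual proof goes through the Szemer\'edi regularity lemma; a regularity-free proof was given later by Fox and Sudakov). If the task was to supply a proof of this fact, the argument you need has a completely different shape: partition $V(G)$ by regularity, use $H$-freeness to find a regular pair that is $\epsilon$-sparse or $\epsilon$-dense, and extract from it a linear-sized $X$ of low (co-)degree. There is no role here for $\tau$-criticality, blockades, or combs, and the dependence is $\delta=\delta(H,\epsilon)$ with no parameters $k,d,t,\tau$ in sight.
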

\begin{claim}
\label{basecase}
(Variant of \cite[6.7]
{chudnovsky2021erdos}): For all 
$s\geq 1$, let $D_s=2^{s-
1}d^{2s-1}$, $d=4$.
Let 
$\mathcal{B}=(B_i:i=1,...,D_s)$ 
be a rainbow $\binom{2}{2}$-free blockade in $G$ of length 
$D_s$ and width $W$. Then $G$ admits a 
pure blockade $\mathcal{A}$ 
with a cograph pattern, of 
length $2^s$ and width $\geq 
W/D_s$.
\end{claim}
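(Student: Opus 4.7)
Rather than induct on $s$, I give a direct probabilistic argument producing a pure blockade whose pattern is the empty graph $\overline{K_{2^s}}$ (trivially a cograph).

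First unpack the hypothesis. If $b\in B=\bigcup_i B_i$ had neighbors $a_1\in B_i$ and $a_2\in B_j$ in two distinct blocks, then $(a_1,a_2)$ would be a $\mathcal B$-rainbow pair with common neighbor $b\in B$, contradicting rainbow $\binom{2}{2}$-freeness. Hence every $b\in B$ has all its $B$-neighbors in a single block. Writing $B_i^{(j)}:=\{b\in B_i: b\text{ has a neighbor in }B_j\}$ for $j\neq i$, these sets are pairwise disjoint subsets of $B_i$, so $\sum_{j\neq i}|B_i^{(j)}|\leq W$.

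Now sample $I\subseteq\{1,\ldots,D_s\}$ of size $2^s$ uniformly at random, and for each $i\in I$ set
$$A_i:=B_i\setminus\bigcup_{j\in I\setminus\{i\}}B_i^{(j)}.$$
By construction $A_i$ has no neighbor in any $B_j$ with $j\in I\setminus\{i\}$, hence is anticomplete to $A_j\subseteq B_j$. Thus $\mathcal A=(A_i:i\in I)$ is automatically a pure blockade with pattern $\overline{K_{2^s}}$, and it only remains to verify that with positive probability $|A_i|\geq W/D_s$ for every $i\in I$.

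Conditional on $i\in I$, each other index $j\neq i$ lies in $I$ with probability $(2^s-1)/(D_s-1)$, so
$$\mathbb E\bigl[|B_i\setminus A_i|\,\big|\,i\in I\bigr]=\frac{2^s-1}{D_s-1}\sum_{j\neq i}|B_i^{(j)}|\leq\frac{(2^s-1)W}{D_s-1}.$$
Markov gives $\Pr[|A_i|<W/D_s\mid i\in I]\leq(2^s-1)D_s/(D_s-1)^2$, and a union bound over $|I|=2^s$ yields
$$\Pr\bigl[\exists\,i\in I:\ |A_i|<W/D_s\bigr]\leq\frac{2^s(2^s-1)D_s}{(D_s-1)^2}.$$
For $s=1$ this is $8/9$; for $s\geq 2$ it is at most $4\cdot 4^s/D_s=2^{5-3s}\leq 1/2$. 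In either case the bound is $<1$, so a valid realization of $I$ exists.

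The only real obstacle is checking that the union-bound quantity $2^s(2^s-1)D_s/(D_s-1)^2$ is strictly less than $1$ for all $s\geq 1$: this is precisely where the super-polynomial growth of $D_s=2^{s-1}\cdot 4^{2s-1}$ (exceeding $4^s$ by a large margin) is needed, and explains the choice $d=4$ in the statement.
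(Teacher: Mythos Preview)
Your argument is correct and genuinely different from the paper's. One small sloppiness: you write $\sum_{j\neq i}|B_i^{(j)}|\leq W$, but width $W$ is the \emph{minimum} block size, so a priori you only have $\sum_{j\neq i}|B_i^{(j)}|\leq |B_i|$. This does not matter: either first shrink every block to exactly $W$ vertices (the contraction is still rainbow $\binom{2}{2}$-free), or note that the Markov threshold $|B_i|-W/D_s\geq |B_i|(1-1/D_s)$ gives the same bound $\frac{(2^s-1)D_s}{(D_s-1)^2}$ regardless of $|B_i|\geq W$.

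The paper proceeds by induction on $s$, following the template of \cite[6.7]{chudnovsky2021erdos}: at each step one designates two ``quarter'' sub-blockades $L$ and $R$, observes (from the same single-block-neighbourhood property you use) that every remaining vertex is anticomplete to $L$ or to $R$, takes the majority side by pigeonhole to get a large pure pair, and then recurses inside each side. This deterministic divide-and-conquer explains the recursion $D_{s+1}=2d^2D_s$ and hence the specific form $D_s=2^{s-1}d^{2s-1}$ with $d=4$.

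Your one-shot probabilistic argument bypasses the induction entirely and directly yields an anticomplete blockade (pattern $\overline{K_{2^s}}$). It is shorter and shows transparently that the conclusion only needs $D_s$ to grow a bit faster than $4^s$; the paper's inductive scheme, by contrast, is tied to the splitting constants and mirrors the structure of the original Chudnovsky--Scott--Seymour--Spirkl argument, which may be preferable when one wants to track the construction explicitly or adapt it to settings where the pattern is not forced to be edgeless.
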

The proof is similar to that of 
\cite[6.7]{chudnovsky2021erdos}.
\begin{proof}
For $s=1$, $D_s=4$. Let 
$X=B_1\cup B_2$, $Y=B_3\cup 
B_4$. Let $A=\{x\in X: \forall 
y\in B_3$ $ \neg E(x,y)\}$, 
$A'=\{x\in X: \forall y\in B_4 
$ $\neg E(x,y)\}$. 
By assumption, 
$A\cup A'=X$.
Hence $|A|\geq 
\frac{1}{2}|X|$ or 
$|A'|\geq\frac{1}{2}|X|$. 
In 
either case, the conclusion 
holds.\\
For $s+1$, 
$D_{s+1}=2^sd^{2s+1}$.
Let $L=B_1\cup...\cup 
B_{\frac{D_{s+1}}
{4}}$, 
$R=B_{\frac{D_{s+1}}
{4}+1}\cup...\cup 
B_{\frac{D_{s+1}}
{2}}$,
$L'=\{x\in (B_1\cup...\cup
B_{D_{s+1}})\setminus (L\cup 
R): 
\forall y\in L$ $\neg E(x,y)\}$, 
$R'=
\{x\in (B_1\cup...\cup 
B_{D_{s+1}})\setminus (L\cup 
R): 
\forall y\in R$ $\neg E(x,y)\}$.
Then $L'\cup R'=(B_1\cup...\cup 
B_{D_{s+1}})\setminus (L\cup R)$
and $|L'|\geq\frac{1}{2}|
(B_1\cup...\cup 
B_{D_{s+1}})\setminus 
(L\cup R)|$ or $|R'|\geq\frac{1}
{2}|
(B_1\cup...\cup 
B_{D_{s+1}})\setminus 
(L\cup R)|$.
In either case,
we have pure pairs $(A,B)$ such 
that 
$|A|,|B|\geq \frac{WD_{s+1}}
{d^2}$. 
The rest is the same as in 
\cite[6.7]
{chudnovsky2021erdos}
\end{proof}
\begin{remark}
\label{baserem}
    Given $\mathcal{B}=
    (B_i:i=1,...,t)$ be a 
rainbow $\binom{2}{2}$-free blockade in 
$G$ 
of length $t\geq 4$ and width 
$W$,
let $s$ be a positive integer 
such 
that $D_s\leq t<D_{s+1}$,
where $D_s$ is as defined in claim \ref{basecase}.
Since for $s\geq 1$,
$2^s\geq 
D_{s+1}^{\frac{1}{10}}
=2^{\frac{s}{2}+\frac{1}{5}}
\geq t^{\frac{1}{10}}$,
and $\frac{W}{D_s}\geq
\frac{W}{t}$,
by claim \ref{basecase}, 
$G$ has 
    a pure blockade 
    $\mathcal{A}$ with a 
    cograph pattern,
    of length $\geq t^{\frac{1}
    {10}}$
    and width $\geq \frac{W}
    {t}$.
\end{remark}
\section{Proof}\label{proof}
\begin{outline}
    We start from a simple observation: 
    claim \ref{keyob} says that given a 
    rainbow  $\binom{k}{2}$-free blockade
    $\mathcal{A}
    =
    (A_i:1\leq i\leq t)$, 
    we can find a rainbow
    $\binom{k-1}{2}$-free minor if there exist $a\in V(A)$ and 
    $((a_j,B_j):
    1\leq j\leq t')$ a comb in 
    $(E_a, \neg E_a)$ such that 
    $\mathcal{B}=
    (B_j: 1\leq j\leq t')$ is
    a minor of $\mathcal{A}$ 
    and for all $j$, 
    $B_j\cap A_{i_0}=\emptyset$,
    where $a\in A_{i_0}$.
    Lemma \ref{keylem}
says we can construct such a comb in a $\tau$-critical
graph $G$ in a given \textquotedblleft
sparse" blockade
$\mathcal{A}$. 
\end{outline}
\begin{claim}
\label{keyob}
    Let $\mathcal{A}
    =
    (A_i:1\leq i\leq t)$ be a rainbow 
    $\binom{k}{2}$-free blockade. 
    If $a\in V(A)$ and 
    $((a_j,B_j):
    1\leq j\leq t')$ is a comb in 
    $(E_a\cap A, 
    \neg E_a\cap A)$ such that 
    $\mathcal{B}=
    (B_j: 1\leq j\leq t')$ is
    a minor of $\mathcal{A}$ 
    and for all $j$, 
    $B_j\cap A_{i_0}=\emptyset$, 
    where $a\in A_{i_0}$,
    then 
    $\mathcal{B}$ is
    rainbow $\binom{k-1}{2}$-free.
\end{claim}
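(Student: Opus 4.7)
The plan is a direct contradiction: assume there is a $\mathcal{B}$-rainbow $(k-1)$-tuple $(c_1,\dots,c_{k-1})$ witnessing the $\binom{k-1}{2}$-property over $B$, and then produce an $\mathcal{A}$-rainbow $k$-tuple with the $\binom{k}{2}$-property over $A$, contradicting the hypothesis on $\mathcal{A}$. The candidate tuple is the obvious one, namely $(a,c_1,\dots,c_{k-1})$, with $a$ playing the role of a new vertex and the comb supplying the missing $k-1$ witnesses.

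First I would verify the rainbow condition. Since $\mathcal{B}$ is a minor of $\mathcal{A}$, each $B_j$ is contained in some $A_{\sigma(j)}$ with $\sigma$ injective, and since $(c_1,\dots,c_{k-1})$ is $\mathcal{B}$-rainbow, the $c_i$ lie in pairwise distinct blocks $A_{\sigma(j_i)}$ (where $c_i\in B_{j_i}$). The hypothesis $B_j\cap A_{i_0}=\emptyset$ for all $j$ then ensures that $A_{i_0}$ (the block containing $a$) is distinct from every $A_{\sigma(j_i)}$, so $(a,c_1,\dots,c_{k-1})$ is $\mathcal{A}$-rainbow.

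Next I would assemble the witnesses for the $\binom{k}{2}$-property. For each pair $(c_i,c_j)$ with $i<j$, reuse the existing witness $b_{ij}\in B$ furnished by the $\binom{k-1}{2}$-property; since $B\subseteq \neg E_a\cap A$, one automatically has $\neg E(b_{ij},a)$ in addition to the adjacency/non-adjacency pattern among the $c_m$'s, and $b_{ij}\in A$ as required. For each pair $(a,c_i)$, take the tooth $a_{j_i}\in E_a\cap A$ of the comb indexed by the (unique) $j_i$ with $c_i\in B_{j_i}$. By the definition of a $(t',\cdot)$-comb in $(E_a\cap A,\neg E_a\cap A)$, the vertex $a_{j_i}$ is adjacent to $a$ and to every vertex of $B_{j_i}$ (hence to $c_i$), but has no neighbour in $B_{j'}$ for $j'\neq j_i$ (hence is non-adjacent to every $c_m$ with $m\neq i$, using rainbow distinctness of the $j_m$'s). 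This is exactly the required witness for the pair $(a,c_i)$, and it lies in $A$.

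There is no real obstacle here: the lemma is precisely an unpacking exercise, and the hypotheses $B_j\subseteq \neg E_a$, $a_j\in E_a$, $B_j\cap A_{i_0}=\emptyset$, and \textquotedblleft $\mathcal{B}$ is a minor of $\mathcal{A}$'' were chosen to make each coordinate of the verification go through. The only thing to be slightly careful about is the rainbow bookkeeping when different $c_m$ happen to live in the same \emph{original} $A_i$ before the sub-blockade is taken; using the injectivity of $\sigma$ coming from the definition of sub-blockade rules this out. Once all four bulleted conditions of the $\binom{k}{2}$-property are confirmed for $(a,c_1,\dots,c_{k-1})$ over $A$, the hypothesis that $\mathcal{A}$ is rainbow $\binom{k}{2}$-free is violated, completing the proof.
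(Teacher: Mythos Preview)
Your proposal is correct and follows exactly the same approach as the paper: assume a $\mathcal{B}$-rainbow $(k-1)$-tuple with the $\binom{k-1}{2}$-property, adjoin $a$, reuse the old witnesses for the pairs among the $c_i$, and use the comb teeth $a_{j_i}$ as witnesses for the pairs $(a,c_i)$. The paper's version is terser and does not spell out the rainbow bookkeeping, but your more careful verification (in particular the use of injectivity of $\sigma$ and of $B_j\cap A_{i_0}=\emptyset$) is the right way to make the argument rigorous.
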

\begin{proof}
      Let $\mathcal{A}$, 
    $\mathcal{B}$ be as in the claim.
    Suppose $(b_1,...,b_{k-1})$ is a 
    $\mathcal{B}$-rainbow tuple with 
    $\binom{k-1}{2}$-property over $B$, 
    witnessed by 
    $\{c_{lm}\}_{l\neq m,
    1\leq l,m \leq k-1}$.
    Then $(a,b_1,...,b_{k-1})$
    has $\binom{k}{2}$-property 
    over $A$, 
    witnessed by 
    $\{c_{lm}\}_{l\neq m,
    1\leq l,m \leq k-1}
    \cup 
    \{a_j\}_{1\leq j\leq t'}$,
    a contradiction.
\end{proof}
\begin{notation}
    Let $G$ be a graph. 
    Let $\mathcal{F}_G
    =
\{((b_i,B_i):1\leq i\leq s)\subseteq G:\\
\exists a\in V[G]$ $((b_i,B_i):1\leq i\leq s)$ is a $(s,\frac{|G|}{s^2})$-comb in $(E_a,\neg E_a)$ and $(B_i:1\leq i\leq s)$ is an equicardinal blockade $\}$. 
Then let \emph{$W_G$} denote minimal width of equicardinal blockades $(B_i:1\leq i\leq s)$ satisfying that there exist $(b_i:1\leq i\leq s)$ such that $((b_i,B_i):1\leq i\leq s)$ is in $\mathcal{F}_G
\cup 
\mathcal{F}_{\overline{G}}$, if $\mathcal{F}_G
\cup
\mathcal{F}_{\overline{G}}
\neq \emptyset$; let $W_G=|G|$,
if $\mathcal{F}_G
\cup 
\mathcal{F}_{\overline{G}}=\emptyset$.  
\end{notation}
\begin{lemma}
\label{keylem}
(main lemma)
Given $k\in\mathbb{N},
d\in\mathbb{R}$ with 
$k\geq 2$, $d\geq 2$, 
there exists $\tau_0=\tau_0(k,d)$, 
$L_0=L_0(k,d)$ satisfying the following:
\\
\indent
Let $\tau<\tau_0$, $G$ a
strongly $\binom{k}{2}$-free
$\tau$-critical graph,
and $\mathcal{A}=(A_i:1\leq i\leq t)\subseteq G$
an equicardinal blockade of width $\frac{|G|}{t^d}$ with $\frac{|G|}{t^{2d}}\leq W_G$,
of length 
$L_0 \leq t\leq 
2|G|^{\frac{1}{d}}$ such that for all $a\in A$,
$|E(a,A)|<
\frac{|G|}{t^{d}}$.
\\
\indent
Then there exist
$b\in A$,
an $(t', \frac{|G|}{t^{2d+2}})$-comb $((a_j,A_j'):1\leq j\leq t')$ in
\\
$(E_b\cap A,
\neg E_b\cap A)$ such that $\mathcal{A}'=(A_j':1\leq j\leq t')$ is an equicardinal minor of $\mathcal{A}$ with width $\geq
\frac{|G|}{t^{2d+2}}$, 
length $\geq
t^{\frac{1}{8}}$.
\end{lemma}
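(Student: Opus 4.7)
The plan is to apply the comb-or-pure-pair dichotomy (Fact~\ref{comblem}) to a carefully chosen vertex $b\in A$, using $\tau$-criticality to guarantee that $b$ has sufficient degree in $A$ and to rule out the pure-pair alternative, and using the $W_G$ hypothesis to rule out the case that the extracted comb is too long. The first substep is to show that some $b\in A$ has $|E_b\cap A|\geq t^{1/8}$: otherwise $G[A]$ has maximum degree less than $t^{1/8}$, and a greedy argument produces an independent set, hence a cograph, in $G$ of size at least $|A|/t^{1/8} = |G|/t^{d-7/8}$. Combined with $t\leq 2|G|^{1/d}$ this exceeds $|G|^\tau$ whenever $\tau_0 < 7/(8d)$ and $L_0$ is large enough, contradicting $\tau$-criticality of $G$.

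Fix such a $b$ and set $A_b := E_b\cap A$, $B_b := \{v\in\neg E_b\cap A : v\text{ has a neighbor in } A_b\}$. Every $v\in B_b$ has a neighbor in $A_b$ by construction, and every $a\in A_b$ has at most $\Delta:=|G|/t^d$ neighbors in $B_b$ by sparsity, so Fact~\ref{comblem} applies with parameters $\Gamma>0$ and $d'\in(0,1)$ to be chosen. It yields either a $(t^*,\Gamma(t^*)^{-1/d'})$-comb, or the pure-pair bound $|B_b|\leq C\Gamma^{d'}\Delta^{1-d'}$. If the pure-pair bound holds, then $S := (\neg E_b\cap A)\setminus B_b$ has size $\Omega(|A|)$ and is anticomplete to $A_b$; recursing inside $S$ and alternating with $\overline{G}$ (using the ``strongly'' in strongly $\binom{k}{2}$-free) chains such pure pairs into a pure blockade with a cograph pattern in the spirit of Claim~\ref{basecase}, producing a cograph of size exceeding $|G|^\tau$ and again contradicting $\tau$-criticality. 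If instead the comb alternative fires with $t^*>t^{d+1}$, then for $\Gamma$ on the order of $|G|$ the bristle size $\Gamma(t^*)^{-1/d'}$ is at most $|G|/(t^*)^2$, so after equalizing bristle sizes the comb witnesses an element of $\mathcal{F}_G$ with equicardinal bristle-blockade of width strictly less than $|G|/t^{2d}$, contradicting the $W_G$ hypothesis. We are therefore left with a comb of length $t^* \in [t^{1/8}, t^{d+1}]$ and bristles of size at least $|G|/t^{2d+1}$ (the slack is built into the parameters).

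To convert this raw comb into an equicardinal minor of $\mathcal{A}$, for each bristle $B_j$ I pigeonhole over the $t$ blocks of $\mathcal{A}$ to select a block $A_{i_j}$ with $|B_j\cap A_{i_j}|\geq |B_j|/t \geq |G|/t^{2d+2}$; a greedy selection across $j$ then extracts a sub-comb in which the chosen blocks $A_{i_j}$ are distinct, losing only a constant factor in length, and a final uniform contraction equalizes the bristle sizes. The main obstacle is the simultaneous parameter balancing in the middle step: $\Gamma$ and $d'$ in Fact~\ref{comblem} must be chosen so that the pure-pair bound is genuinely much smaller than $tW$, the transition at $t^{d+1}$ really triggers a $W_G$ contradiction (which requires the bristle size to exactly match the $(s,|G|/s^2)$ form used in the definition of $\mathcal{F}_G$), and the intermediate-regime bristles still survive the pigeonhole loss of a factor $t$ in the final refinement. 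A secondary difficulty is running the doubling recursion in the pure-pair case long enough that the resulting cograph exceeds $|G|^\tau$ (not merely $|A|^\tau$), handling $G$ and $\overline{G}$ symmetrically via the inclusion of $\mathcal{F}_{\overline{G}}$ in the definition of $W_G$.
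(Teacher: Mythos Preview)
Your proposal has a genuine gap in the step converting the raw comb produced by Fact~\ref{comblem} into an equicardinal minor of $\mathcal{A}$. You claim that pigeonholing each bristle into a heavy block and then greedily selecting distinct blocks ``loses only a constant factor in length'', but this is not justified: each bristle has size at most $\Delta=|G|/t^d$ by the sparsity hypothesis, while each block $A_i$ also has size $|G|/t^d$, so a bristle can lie entirely inside a single block, and in fact up to $t^d$ disjoint bristles of size $|G|/t^{2d}$ can all sit in one block. Since your $W_G$ argument only forces $t^*\leq t^d$, nothing prevents all the bristles from concentrating in one block, and the greedy selection can terminate after a single step. This is structural, not a matter of tuning $\Gamma$ and $d'$: a single application of Fact~\ref{comblem} gives no control over how the bristles are distributed across the blocks of $\mathcal{A}$.

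The paper's proof does not separate ``find a comb'' from ``make it a minor''. It runs an outer iteration: at step $u$ it picks $a_u$ of maximal degree $\Delta_u$ in the current set $R_u$, rebuilds the layered comb decomposition $(T^s_i)$ over $(E(a_u,R_u),\neg E(a_u,R_u))$, and on top of that runs a second greedy (the sets $I_l,\dots,I_1$) which at each step insists on an \emph{unused} block $A_i$ with intersection at least $|G|/t^{2d+2}$. The heart of the argument is the $S_1/S_2$ accounting, which shows that if this block-respecting greedy yields fewer than $t^{1/8}$ teeth then the total mass of vertices reached from $E(a_u,R_u)$ is at most $(|G|/t^d)\cdot t^{1/4}$; one then passes to $R_{u+1}$ and repeats. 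After $\lceil t^{1/8}\rceil$ iterations either some step has succeeded, or the sets $E(a_u,R_u)$ form a pairwise anticomplete pure blockade of length $t^{1/8}$, and $\tau$-criticality is contradicted via two sub-cases according to whether $\Delta_{\lceil t^{1/8}\rceil}\geq |G|/t^{2d}$. The recursion stays in $G$ throughout; the ``alternating with $\overline{G}$'' you propose plays no role here.
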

\begin{outline}
    We will construct a sequence
    $(a_u,\Delta_u,R_u)$:\\
    \indent
   We define $R_0=A$, $a_0\in R_0$ such that
    $E(a_0,R_0)$ is maximum,
    $\Delta_0=|E(a_0,R_0)|$.
    \\
    \indent
    Suppose  $(a_u,\Delta_u,R_u)$ is 
    constructed.
   If $\Delta_u\geq
   \frac{|G|}{t^{2d}}$,
    we use the idea of
    fact \ref{comblem} to construct the
    comb we want. 
    If a comb in lemma
    \ref{keylem} 
    exists, then we end construction;
    if it does not exist,
    then the set
    $\{y\in \neg E(a_u,R_u):
    \exists x\in E(a_u,R_u) $ $E(x,y)\}$ has size 
    bounded above by $\frac{|G|}{t^d}
    \cdot t^{\frac{1}{4}}$.
    We take $R_{u+1}$ to be 
    $\{y\in \neg E(a_u,R_u):
    \neg\exists x\in E(a_u,R_u)$ $E(x,y)\}$,
    $a_{u+1}\in R_{u+1}$ such that
    $E(a_{u+1},R_{u+1})$ is maximum,
    $\Delta_{u+1}=|E(a_{u+1},R_{u+1})|$.
    If $\Delta_u<\frac{|G|}{t^{2d}}$,
    by the choice of $W_G$ and
    by \cite[2.1]{chudnovsky2021erdos},
    the set
    $\{y\in \neg E(a_u,R_u):
    \exists x\in E(a_u,R_u) $ $E(x,y)\}$ has size 
    bounded above by $\frac{3^{\frac{1}{2}+1}}{\frac{3}{2} -(\frac{3}{2})^{\frac{1}{2}}}
    \cdot
    \frac{|G|}{t^{d}}
$.
 We take $R_{u+1}$ to be 
    $\{y\in \neg E(a_u,R_u):
    \neg\exists x\in E(a_u,R_u)$ $E(x,y)\}$,
    $a_{u+1}\in R_{u+1}$ such that
    $E(a_{u+1},R_{u+1})$ is maximum,
    $\Delta_{u+1}=|E(a_{u+1},R_{u+1})|$.
    \\
    \indent
      Because in the construction of the $R_u$'s, 
      we removed a set of 
      \textquotedblleft
      small" size
      at each step,
    $R_{\lceil
    t^{\frac{1}{8}}
    \rceil
    }$ has size at least 
    $\frac{|G|}{t^d}
    \cdot t^{\frac{1}{2}}$.
    We then discuss two cases depending on the size of
    $\Delta_{\lceil
    t^{
    \frac{1}{8}
    }\rceil}$.
    \\
    \indent
    We observe that for all $u<u'$,
    for all $x\in E(a_u;R_u)$, 
    $y\in E(a_{u'};R_{u'})$, $\neg E(x,y)$.
    If $\Delta_{\lceil
    t^{
    \frac{1}{8}
    }\rceil}
    \geq
    \frac{|G|}{t^{2d}}$, 
    then $\bigl(E(a_u;R_u):
1\leq u\leq \lceil
t^{\frac{1}{8}}\rceil\bigr)$ is a 
pure blockade of cograph pattern
and since $G$ is $\tau$-critical,
we have in $G$ a cograph of size
$\geq
\frac{|G|^{\tau}}{t^{2d\tau}}
\cdot
t^{\frac{1}{8}}
\geq
|G|^{\tau}$
contradicting the choice of $\tau$;
if $\Delta_{\lceil
    t^{
    \frac{1}{8}
    }\rceil}
    <
    \frac{|G|}{t^{2d}}$,
    then we follow the proof of 
    \cite[3.1]{chudnovsky2021erdos}
    to get a contradiction. 
\end{outline}
\begin{proof}
Let $k,d$ be given. Let $K=\underset{\alpha=1}{\overset{\infty}{\sum}} (\frac{2}{3})^{\alpha}$. Let $L_0$ be the smallest positive integer such that
for all $L\geq L_0$,
\begin{equation*}
    L^\frac{1}{4}\geq 
(3+\frac{9}{2}K)L^{\frac{1}{8}}
+
3
+
 \frac{3^{\frac{1}{2}+1}}{\frac{3}{2} -(\frac{3}{2})^{\frac{1}{2}}},
 \end{equation*}
 \begin{equation*}
   L-
2
L^{\frac{1}{8}}
(1
+
2^d
+
L^{\frac{1}{4}}
)
\geq
 L^{\frac{1}{2}}.
 \end{equation*}
 \\
 \indent
 Let $\tau_0>0$ be small such that for all $\tau<\tau_0$,
 \begin{equation*}
     \tau-\frac{1}{2d}
     <
     -(d+1)\tau,
 \end{equation*}
 \begin{equation*}
     L_0^{-d-\frac{1}{2}+2d\tau}
     +
     \frac{3^{\frac{1}{2}+1}}{\frac{3}{2} -(\frac{3}{2})^{\frac{1}{2}}}
    \cdot
     L_0^{-\frac{1}{2}+
     2d\tau}
     +
     2^
     {-\frac{1}{2}}
       <1,
 \end{equation*}
 and
 \begin{equation*}
     L_0^{\frac{1}{8}-2d\tau}
     >1.
 \end{equation*}
 \\
\indent
Fix any $\tau<\tau_0$. Let $G$ be $\tau$-critical.
\\
\indent
Let $\mathcal{A}=(A_i:1\leq i\leq t)\subseteq G$ be an equicardinal blockade with width $\frac{|G|}{t^d}$ such that $\frac{|G|}{t^{2d}}\leq W_G$, length $L_0
\leq t\leq
2|G|^{\frac{1}{d}}$ and for all $a\in A$,
$|E(a,A)|<\frac{|G|}{t^d}$. Suppose the comb in the statement does not exist.
\\
\indent
Construct $a_u,\Delta_u,R_u$ such that 
\begin{enumerate}
    \item $\Delta_u$ is the maximal degree in $G[R_u]$
    \item $a_u\in R_u$ such that $|E(a_u,R_u)|=\Delta_u$
\end{enumerate}
as follows:\\
\indent
Let $R_0=A$. Let $\Delta_0$ 
be the maximum of 
$\{|E(x,A)|:x\in A\}$. Let 
$a_0\in R_0$ such that 
$|E(a_0,R_0)|=\Delta_0$.  
\\
\indent
Suppose $\Delta_u,R_u,a_u$ 
are constructed. We construct 
$\Delta_{u+1}, 
R_{u+1},a_{u+1}$:
\\
\indent 
Let $C=E(a_u,R_u)$.
Let $D=\neg E(a_u,R_u)$. 
\\
     \indent
     We repeat 
the construction of 
combs in \cite[2.1]
{chudnovsky2021erdos}.\\ 
Construct 
$k_s\in\mathbb{N},
a^s_{1},...,a^s_{k_s}\in C,
T^s_1,...,T^s_{k_s}\subseteq D,
C_s\subseteq D$ as 
follows:
\\
\indent
$\underline{s=1}$: Choose $a^1_1,...,a^1_{k_1}\in C$ with $k_1$ max such that for all $i$ there exist $\geq\frac{2}{3}\Delta_u$ vertices in $D$ that are adjacent to $a^1_i$ and non adjacent to $a^1_1,...,a^1_{i-1}$. Let $T^1_i=E(a^1_i,D)\setminus(E(a^1_1,D)\cup...\cup E(a^1_{i-1},D))$, $C_1=\underset{i}{\bigcup}$ $E(a^1_i,D)$. Then $C_1=\underset{i}{\bigcup}$ $T^1_i$.
\\
\indent
$\underline{s+1}$: Suppose $k_1,...,k_s\in\mathbb{N},
a^1_{1},...,a^1_{k_1},...,a^s_{1},...,a^s_{k_s}\in C,
T^1_1,...,T^1_{k_1},...,T^s_1,...,T^s_{k_s}\subseteq D,
C_1,...,C_s\subseteq D$ are defined. Then by maximality of $k_s$, every vertex in $C$ has $<(\frac{2}{3})^s\Delta_u$ neighbours in $F$, where $F=D\setminus (C_1\cup...\cup C_s)$. Choose $a^{s+1}_1,...,a^{s+1}_{k_{s+1}}\in C$ with $k_{s+1}$ max such that for all $i$ there exist $\geq(\frac{2}{3})^{s+1}\Delta_u$ vertices in $F$ that are adjacent to $a^{s+1}_i$ and non adjacent to $a^{s+1}_1,...,a^{s+1}_{i-1}$. Let $T^{s+1}_i=E(a^{s+1}_i,F)\setminus(E(a^{s+1}_1,F)\cup...\cup E(a^{s+1}_{i-1},F))$ and $C_{s+1}=\underset{i}{\bigcup}$ $E(a^{s+1}_i,F)$. Then $C_{s+1}=\underset{i}{\bigcup}$ $T^{s+1}_i$.\\
\indent
Observe that
\begin{enumerate}
    \item 
for all $s$, all $i< i'$ and all $y\in T^{s}_{i'}$, $\neg E(a^{s}_{i},y)$. 
\item
for all $s<s'$, all $i,i'$ and all $y\in T^{s'}_{i'}$, $\neg E(a^s_i,y)$.
\item 
for all $s,i$, 
$(\frac{2}{3})^{s}\Delta_u\leq|T^s_i|\leq(\frac{2}{3})^{s-1}\Delta_u$.

 \end{enumerate}
\underline{Case 1: $\Delta_u\geq
\frac{|G|}{t^{2d}}$} \\
\indent
Let $l$ be the largest such that $(\frac{2}{3})^{l-1}\Delta_u
 \geq\frac{|G|}{t^{2d}}$.
 \\
 \indent
Construct $I_l,...,I_1$ as follows:
\\
$\underline{\text{Construct } I_l}$:
Let $I^{k_l}_l=\{a^{l}_{k_{l}}\}$. Choose $A_i$ such that $|A_i\cap T^{l}_{k_{l}}|\geq
\frac{2}{3}
\cdot
\frac{|G|}{t^{2d+1}}$. Such $A_i$ exists, since we assumed $(\frac{2}{3})^{l}\Delta_u\geq
\frac{2}{3}\cdot
\frac{|G|}{t^{2d}}$. Denote this chosen $i$ by $i^{l}_{k_{l}}$ and let $P^{l}_{k_{l}}=A_{i^{l}_{k_{l}}}\cap T^{l}_{k_{l}}$.\\
\indent
Suppose $I^m_l,...,I^{k_l}_l$ are constructed. For $a^{l}_{m-1}$, if $<\frac{1}{2}|T^{l}_{m-1}|$ many vertices of $T^{l}_{m-1}$ is adjacent to an element in $I^m_l$,
and there exists $A_i$ such that $A_i\cap P^{l}_{m'}=\emptyset$ for all $a^{l}_{m'}$ in $I^m_l$ (i.e. $i\neq i^{l}_{m'}$ for all $a^{l}_{m'}$ in $I^m_l$)
and 
\begin{equation*}
|A_i\cap T^{l}_{m-1} \cap 
\underset{a^{l}_{m'}\in I^m_l}{\bigcap}\neg E(a^{l}_{m'},D)|\geq 
\frac{|G|}{t^{2d+2}},
\end{equation*}
 then let $I^{m-1}_l=\{a^{l}_{m-1}\}\cup I^m_l$ and choose such $i$ to be $i^{l}_{m-1}$. Let $P^{l}_{m-1}=A_{i^{l}_{m-1}}\cap T^{l}_{m-1} \cap 
\underset{a^{l}_{m'}\in I^m_l}{\bigcap}\neg E(a^{l}_{m'},D)$. Otherwise, let $I^{m-1}_l=I^m_l$.\\
\indent
Take $I_l=I^1_l$.\\
\\
\indent
 Suppose $I_l,...,I_n$ are constructed.\\
$\underline{\text{Construct } I_{n-1}}$:
Let $I^{k_{n-1}+1}_{n-1}
=\emptyset$.
Suppose $I^{m+1}_{n-1},...,I^{k_{n-1}+1}_{n-1}$
are constructed.
\\
\indent
For $a^{n-1}_{m}$,
if $<\frac{1}{2}|T^{n-1}_{m}|$
many vertices of $T^{n-1}_{m}$ 
is adjacent to an element in 
$I^{m+1}_{n-1}
\cup I_{n}\cup ...\cup I_{l}$ 
and there exists $A_i$ such 
that $A_i\cap 
P^{\alpha}_{m'}=\emptyset$ 
for all $a^{\alpha}_{m'}\in 
I^{m+1}_{n-1}
\cup I_{n}\cup...\cup I_{l}$
and 
\begin{equation*} 
|A_i\cap T^{n-1}_{m} \cap 
\underset{a^{\alpha}_{m'}\in
I^{m+1}_{n-1}\cup I_n\cup...\cup I_{l}}{\bigcap}\neg E(a^{l}_{m'},D)|\geq 
\frac{|G|}{t^{2d+2}},
\end{equation*}
then set $I^m_{n-1}=\{a^{n-1}_{m}\}\cup I^{m+1}_{n-1}$ and choose such $i$ to be $i^{n-1}_{m}$. Let $P^{n-1}_{m}=A_{i^{n-1}_{m}}\cap T^{n-1}_{m} \cap 
\underset{a^{\alpha}_{m'}\in
I^{m+1}_{n-1}
\cup I_{n}\cup...\cup I_{l}}{\bigcap}\neg E(a^{l}_{m'},D)$. Otherwise, let $I^m_{n-1}=I^{m+1}_{n-1}$. Take $I_{n-1}=I^1_{n-1}$.
\\\\
\indent
Then $((a^{\alpha}_{m},P^{\alpha}_{m}):a^{\alpha}_{m}\in I_1\cup...\cup I_l)$ is a $(E_{a_u}\cap A,
\neg E_{a_u}\cap A)$-comb such that for $(\alpha,m)\neq (\alpha',m')$, $i^{\alpha}_{m}\neq i^{\alpha'}_{m'}$ with width $\geq
\frac{|G|}{t^{2d+2}}$. If $|I_1\cup...\cup I_l|
\geq
t^{\frac{1}{8}}$, then it's a comb satisfying the statement of the lemma, a contradiction. Hence $|I_1\cup...\cup I_l|
<
t^{\frac{1}{8}}$ and we bound the size of $C_1\cup...\cup C_l=\underset{\alpha\in\{1,...,l\},m\in\{1,...,k_{\alpha}\}}{\bigcup}T^{\alpha}_m$ as follows:\\
\\
\indent
 For each 
 $a^{\alpha}_{m}\notin 
 I_1\cup...\cup I_l$ with 
 $\alpha\in\{1,...,l\}$, 
 $\geq\frac{1}
 {2}\cdot|T^{\alpha}_{m}|$ 
 many vertices of 
 $T^{\alpha}_{m}$ are in 
 $\underset{x\in 
 (I_{\alpha}\cap
 \{a^{\alpha}_{m+1},...,
 a^{\alpha}_{k_{\alpha}}\})
 \cup I_{\alpha+1}...\cup I_l}
 {\bigcup}E(x,R_u)$ or\\
$<\frac{1}
{2}\cdot|T^{\alpha}_{m}|$ 
many vertices of 
$T^{\alpha}_{m}$ are in 
$\underset{x\in 
(I_{\alpha}\cap
\{a^{\alpha}_{m+1},...,
a^{\alpha}_{k_{\alpha}}\})\cup
I_{\alpha+1}...\cup I_l}
{\bigcup}E(x,R_u)$,
but for each $i\in 
[t]\setminus
\{i^{\alpha'}_{m'}:
a^{\alpha'}_{m'}\in 
I_{\alpha+1}\cup...\cup I_l$ 
or $a^{\alpha'}_{m'}\in 
I_{\alpha}$ with $m'>m \}$, 
$|A_i\cap T^{\alpha}_{m}
\cap 
\underset{a^{\alpha'}_{m'}\in
(I_{\alpha}\cap
\{a^{\alpha}_{m+1},...,
a^{\alpha}_{k_{\alpha}}\})
\cup 
I_{\alpha+1}\cup...\cup I_{l}}
{\bigcap}\neg 
E(a^{\alpha'}_{m'},D)|
<
\frac{|G|}{t^{2d+2}}$.
\\\\
\indent
Let $S_1=
\{a^{\alpha}_{m}\notin 
I_1\cup...\cup I_l :$ 
$\alpha\in\{1,...,l\}$, 
$\geq\frac{1}
{2}\cdot|T^{\alpha}_{m}|$ 
many vertices of 
$T^{\alpha}_{m}$ are in 
$\underset{x\in 
(I_{\alpha}\cap
\{a^{\alpha}_{m+1},...,
a^{\alpha}_{k_{\alpha}}\})\cup
I_{\alpha+1}...\cup I_l}
{\bigcup}E(x,R_u)$\}. 
By construction of the 
$T^\alpha_m$'s,
for each fixed pair $\alpha, m$, 
$|T^\alpha_m|\geq
(\frac{2}
{3})^{\alpha}\Delta_u$,
and 
\begin{equation*}
    T^\alpha_m\cap
\underset{x\in 
(I_{\alpha}\cap
\{a^{\alpha}_{m+1},...,
a^{\alpha}_{k_{\alpha}}\})\cup
I_{\alpha+1}...\cup I_l}
{\bigcup}E(x,R_u)\subseteq
\end{equation*}
\begin{equation*}
    T^\alpha_m\cap
\underset{x\in 
I_{\alpha}\cup 
I_{\alpha+1}\cup...\cup I_l}
{\bigcup}E(x,R_u)
=
\end{equation*}
\begin{equation*}
\underset{x\in 
I_{\alpha}\cup 
I_{\alpha+1}\cup...\cup I_l}
{\bigcup}E(x,T^\alpha_m)
\subseteq
\end{equation*}
\begin{equation*}
    \underset{x\in 
I_{\alpha}\cup I_{\alpha+1}\cup...\cup I_l}{\bigcup}E(x,C^\alpha).
\end{equation*}
By construction, for each $x\in I_{\alpha}\cup I_{\alpha+1}\cup ...\cup I_l$,
$|E(x, C_\alpha)|<(\frac{2}{3})^{\alpha-1}\Delta_u$. Thus, for any fixed $\alpha\in
\{1,...,l\}$, we have 
\begin{equation*}  
(|\{x\in S_1: x=a^\alpha_m \text{ for some }m\in\{1,...,k_\alpha\}\}|)\cdot\frac{1}{2}\cdot(\frac{2}{3})^{\alpha}\Delta_u\leq
    (|I_\alpha|+...+|I_l|)\cdot (\frac{2}{3})^{\alpha-1}\Delta_u.
    \end{equation*}
Hence there exist at most $3\cdot(|I_{\alpha}|+...+|I_{l}|)$ many $a^{\alpha}_m$'s in $S_1$ for each fixed $\alpha$ and 
\begin{equation*}
    |\underset{a^{\alpha}_{m}\in S_1}{\bigcup}T^{\alpha}_m|
\leq 
\underset{\alpha=1}{\overset{l}{\sum}}
    3\cdot
    (\frac{2}{3})^{\alpha-1}\Delta_u\cdot(|I_\alpha|+...+|I_l|).
    \end{equation*}
\indent
Let $S_2=\Bigl\{a^{\alpha}_{m}\notin I_1\cup...\cup I_l :$ $\alpha\in\{1,...,l\}$, $<\frac{1}{2}\cdot|T^{\alpha}_{m}|$ many vertices \\\\
of
$T^{\alpha}_{m}$ are in $\underset{x\in (I_{\alpha}\cap\{a^{\alpha}_{m+1},...,a^{\alpha}_{k_{\alpha}}\})\cup I_{\alpha+1}\cup...\cup I_l}{\bigcup}E(x,R_u)$, 
\\
but for each $i\in 
[t]\setminus \{i^{\alpha'}_{m'}:a^{\alpha'}_{m'}\in (I_{\alpha}\cap\{a^{\alpha}_{m+1},...,a^{\alpha}_{k_{\alpha}}\})\cup I_{\alpha+1}\cup...\cup I_l\}$,
\begin{equation*}
|A_i\cap T^{\alpha}_{m}\cap 
\underset{a^{\alpha'}_{m'}\in
(I_{\alpha}\cap\{a^{\alpha}_{m+1},...,a^{\alpha}_{k_{\alpha}}\})
\cup I_{j(\alpha+1)}\cup...\cup I_{l}}{\bigcap}\neg E(a^{\alpha'}_{m'},R_u)|<\frac{|G|}{t^{2d+2}}\Bigr\}.
\end{equation*}
Define $S^\alpha_m:=
 \{i^{\alpha'}_{m'}:a^{\alpha'}_{m'}\in (I_{\alpha}\cap\{a^{\alpha}_{m+1},...,a^{\alpha}_{k_{\alpha}}\})\cup I_{\alpha+1}\cup...\cup I_l\}$.\\
Then
\begin{equation*}
|\underset{a^{\alpha}_{m}\in S_2}{\bigcup}
(T^{\alpha}_m
\cap 
\underset{a^{\alpha'}_{m'}\in
(I_{\alpha}\cap\{a^{\alpha}_{m+1},...,a^{\alpha}_{k_{\alpha}}\})
\cup I_{\alpha+1}\cup...\cup I_{l}}{\bigcap}\neg E(a^{\alpha'}_{m'},R_u))|=
\end{equation*}
\begin{equation*}
    |\underset{a^{\alpha}_{m}\in S_2}{\bigcup}
\Bigl(\underset{i\in [t]\setminus S^\alpha_m}{\bigcup} (A_i\cap T^{\alpha}_m
\cap 
\underset{a^{\alpha'}_{m'}\in
(I_{\alpha}\cap\{a^{\alpha}_{m+1},...,a^{\alpha}_{k_{\alpha}}\})
\cup I_{\alpha+1}\cup...\cup I_{l}}{\bigcap}\neg E(a^{\alpha'}_{m'},R_u))
\cup
\end{equation*}
\begin{equation*}
\underset{i\in S^\alpha_m}{\bigcup} (A_i\cap T^{\alpha}_m
\cap 
\underset{a^{\alpha'}_{m'}\in
(I_{\alpha}\cap
\{a^{\alpha}_{m+1},...,
a^{\alpha}_{k_{\alpha}}\})
\cup I_{\alpha+1}\cup...\cup 
I_{l}}{\bigcap}\neg 
E(a^{\alpha'}_{m'},R_u))
\Bigr)|
\end{equation*}
Since there are $t$ many 
$A_i$'s and because 
$T^\alpha_m\geq
(\frac{2}
{3})^\alpha\Delta_u\geq 
\frac{2}{3}
\cdot
\frac{|G|}{t^{2d}}$ for all 
$\alpha\in\{1,...,l\}$, 
there are at most $
\frac{|G|
\cdot
t}{t^{d}
\cdot 
\frac{2}{3}\cdot
\frac{|G|}{t^{2d}}
}
=\frac{3}{2}
\cdot
t^{d+1}$ 
many $T^{\alpha}_m$ with 
$\alpha\in\{1,...,l\}$, 
we have, 
\begin{equation*}
|\underset{a^{\alpha}_{m}
\in S_2}{\bigcup}
\underset{i\in [t]\setminus 
S^\alpha_m}{\bigcup} (A_i\cap 
T^{\alpha}_m
\cap 
\underset{a^{\alpha'}_{m'}\in
(I_{\alpha}\cap\{a^{\alpha}_{m+1},...,a^{\alpha}_{k_{\alpha}}\})
\cup I_{\alpha+1}\cup...\cup I_{l}}{\bigcap}\neg E(a^{\alpha'}_{m'},R_u))|
\leq
\frac{|G|}{t^{2d
+2}}
\cdot
t\cdot
\frac{3}{2}
\cdot
t^{d+1}
\end{equation*}
Since for each $i\in[t]$, $|A_i|=\frac{|G|}{t^d}$, we have,
\begin{equation*}
|\underset{a^{\alpha}_{m}\in S_2}{\bigcup}
\underset{i\in S^\alpha_m}{\bigcup} (A_i\cap T^{\alpha}_m
\cap 
\underset{a^{\alpha'}_{m'}\in
(I_{\alpha}\cap\{a^{\alpha}_{m+1},...,a^{\alpha}_{k_{\alpha}}\})
\cup I_{\alpha+1}\cup...\cup I_{l}}{\bigcap}\neg E(a^{\alpha'}_{m'},R_u))
|
\leq
\end{equation*}
\begin{equation*}
|\underset{i\in\{i^{\alpha'}_{m'}:
a^{\alpha'}_{m'}\in
I_1\cup...\cup I_l
 \}   }
    {\bigcup}A_i|\leq
\end{equation*}
\begin{equation*}
\frac{|G|}{t^{d}}\cdot
(|I_{1}|+...+|I_l|).
\end{equation*}
Hence, 
\begin{equation*}
|\underset{a^{\alpha}_{m}\in S_2}{\bigcup}
(T^{\alpha}_m
\cap 
\underset{a^{\alpha'}_{m'}\in
(I_{\alpha}\cap\{a^{\alpha}_{m+1},...,a^{\alpha}_{k_{\alpha}}\})
\cup I_{\alpha+1}\cup...\cup I_{l}}{\bigcap}\neg E(a^{\alpha'}_{m'},R_u))|
\leq
\frac{|G|}{t^{2d
+2}}
\cdot
t\cdot
\frac{3}{2}
\cdot
t^{d+1}
+
\frac{|G|}{t^{d}}\cdot
(|I_{1}|+...+|I_l|). 
\end{equation*}
Since for each $a^{\alpha}_{m}\in S_2$, $<\frac{1}{2}\cdot|T^{\alpha}_{m}|$ many vertices of $T^{\alpha}_{m}$ are in $\underset{x\in (I_{\alpha}\cap\{a^{\alpha}_{m+1},...,a^{\alpha}_{k_{\alpha}}\})\cup I_{1}\cup...\cup I_l}{\bigcup}E(x,R_u)$, we have 
\begin{equation*}
|\underset{a^{\alpha}_{m}\in S_2}{\bigcup}
T^{\alpha}_m|
\leq
2\cdot
(
\frac{|G|}{t^{2d+2}}
\cdot
t\cdot
\frac{3
}{2}
\cdot
t^{d+1} +
\frac{|G|}{t^{d}}\cdot
(|I_{1}|+...+|I_l|)).
\end{equation*}
Hence, we have
\begin{equation*}
    |\underset{s\leq l}{\bigcup}\text{ }C_s|
    =
    |\underset{\alpha\in\{1,...,l\},m\in\{1,...,k_{\alpha}\}}{\bigcup}T^{\alpha}_m|
    =
    \end{equation*}
    \begin{equation*}
    |\underset{a^\alpha_m\in I_1\cup...\cup I_l}{\bigcup}
    T^\alpha_m
    \cup
    \underset{a^{\alpha}_{m}\in S_1}{\bigcup}
T^{\alpha}_m
\cup
\underset{a^{\alpha}_{m}\in S_2}{\bigcup}T^{\alpha}_m|
\leq
\end{equation*}
\begin{equation*}
   \frac{|G|}{t^{d}}\cdot(|I_1|+...+|I_l|)
   +
   \underset{\alpha=1}{\overset{l}{\sum}}
    3\cdot
    (\frac{2}{3})^{\alpha-1}\Delta_u\cdot(|I_\alpha|+...+|I_l|)
    +
    2\cdot
(
\frac{|G|}{t^{2d+2}}
\cdot
t\cdot
\frac{3}{2}
\cdot
t^{d+1} +
\frac{|G|}{t^{d}}\cdot
(|I_{1}|+...+|I_l|))
\leq
\end{equation*}
\begin{equation*}
   \frac{|G|}{t^{d}}\cdot t^{\frac{1}{8}}
   +
   3\cdot \frac{|G|}{t^{d}}\cdot \frac{3}{2}\cdot
   \underset{\alpha=1}
   {\overset{l}
{\sum}}\underset{\alpha'
=1}
{\overset{\alpha}{\sum}} (\frac{2}
{3})^{\alpha'}|I_\alpha|
   +
   2\cdot
(
\frac{|G|}{t^{d}}
\cdot
\frac{3}{2}
+
\frac{|G|}{t^{d}}\cdot
t^{\frac{1}{8}})
\leq
\end{equation*}
\begin{equation*}
     \frac{|G|}{t^{d}}\cdot t^{\frac{1}{8}}
   +
   3\cdot \frac{|G|}{ t^{d}}\cdot \frac{3}{2}\cdot
   \underset{\alpha=1}{\overset{l}{\sum}} (\frac{2}{3})^{\alpha}
   \cdot(|I_1|+...+|I_l|)
   +
   2\cdot
(
\frac{|G|}{t^{d}}
\cdot
\frac{3}{2}
+
\frac{|G|}{t^{d}}\cdot
t^{\frac{1}{8}})\leq
\end{equation*}
\begin{equation*}
    \frac{|G|}{t^{d}}\cdot 
    t^{\frac{1}{8}}
   +
   3\cdot \frac{|G|}
   {t^{d}}\cdot \frac{3}
   {2}\cdot
   \underset{\alpha=1}
   {\overset{l}{\sum}} 
   (\frac{2}{3})^{\alpha}
   \cdot t^{\frac{1}{8}}
   +
   2\cdot
(
\frac{|G|}{t^{d}}
\cdot
\frac{3}{2}
+
\frac{|G|}{t^{d}}\cdot
t^{\frac{1}{8}})=
\end{equation*}
\begin{equation*}
     \frac{|G|}{t^{d}}\cdot 
     ((3+\frac{9}
     {2}K)t^{\frac{1}{8}}+3), 
     \text{where } 
     K=\underset{\alpha=1}
     {\overset{\infty}{\sum}} 
     (\frac{2}{3})^{\alpha}.
\end{equation*}
\indent
For $s>l$, by the proof of 
\cite[2.1]
{chudnovsky2021erdos}, 
if $k_s>2(2\frac{|G|}{(\frac{2}
{3})^s\Delta_u})^{\frac{1}{2}}$,
then there is a $(t,\frac{|G|}
{t^2})$-comb in 
$(E_{a_u},\neg E_{a_u})$
with width 
$\frac{1}{2}
(\frac{2}{3})^{s}
\Delta_u
<\frac{|G|}{t^{2d}}
<W_G$, 
contradicting the choice of 
$W_G$.
Hence, as in the proof of 
\cite[2.1]{chudnovsky2021erdos}, 
$k_s\leq 2(2\frac{|G|}
{(\frac{2}
{3})^s\Delta_u})^{\frac{1}{2}}$ and 
\begin{equation*}
    |\underset{s>l}
    {\bigcup}\text{ }C_s|
    \leq
    \frac{3^{\frac{1}{2}+1}}
    {\frac{3}{2} -(\frac{3}
    {2})^{\frac{1}
    {2}}}|G|^{\frac{1}{2}}
    ((\frac{2}{3})^{l}\Delta_u)^{\frac{1}{2}}
    \leq
    \frac{3^{\frac{1}{2}+1}}{\frac{3}{2} -(\frac{3}{2})^{\frac{1}{2}}}
    |G|^{\frac{1}{2}}(\frac{|G|}{t^{2d}})^{\frac{1}{2}}
    \leq
    \frac{3^{\frac{1}{2}+1}}{\frac{3}{2} -(\frac{3}{2})^{\frac{1}{2}}}
    \cdot
    \frac{|G|}{t^{d}}
\end{equation*}
Hence, 
\begin{equation*}
   | \{y\in \neg E(a_u,R_u): \exists x\in E(a_u,R_u) \text{ } E(x,y)\}|
   =
   |\underset{s}{\bigcup}\text{ }C_s|
   \leq
\end{equation*}
\begin{equation*}
    \frac{|G|}{t^{d}}\cdot
    ((3+\frac{9}{2}K)
    t^{\frac{1}{8}}
    +3
    +\frac{3^{\frac{1}{2}+1}}
    {\frac{3}{2} -(\frac{3}
    {2})^{\frac{1}{2}}})
    \leq
    \frac{|G|}{t^{d}}\cdot
    t^{\frac{1}{4}}.
\end{equation*}
\\
\indent
Take
\begin{equation*}
    R_{u+1}=
    \{y\in \neg 
E(a_u,R_u): \neg\exists x\in 
E(a_u,R_u) \text{ } E(x,y)\};
\end{equation*}
\indent
Let $\Delta_{u+1}$ be the 
maximal degree in 
$G[R_{u+1}]$.
Let $a_{u+1}\in R_{u+1}$ such that $|E(a_{u+1},R_{u+1})|=
\Delta_{u+1}$.  
\\\\\\
\underline{Case 2: $\Delta_u<\dfrac{|G|}{t^{2d}}$}
\\\\
By \cite[2.1]{chudnovsky2021erdos} and the choice of $W_G$,
\begin{equation*}
   |\underset{s}{\bigcup}\text{ }C_s|
    \leq
    \frac{3^{\frac{1}{2}+1}}{\frac{3}{2} -(\frac{3}{2})^{\frac{1}{2}}}|G|^{\frac{1}{2}}\Delta_u^{\frac{1}{2}}
    \leq
    \frac{3^{\frac{1}{2}+1}}{\frac{3}{2} -(\frac{3}{2})^{\frac{1}{2}}}
    |G|^{\frac{1}{2}}(\frac{|G|}{t^{2d}})^{\frac{1}{2}}
    \leq
    \frac{3^{\frac{1}{2}+1}}{\frac{3}{2} -(\frac{3}{2})^{\frac{1}{2}}}
    \cdot
    \frac{|G|}{t^{d}}  
\end{equation*}
Take
\begin{equation*}   
R_{u+1}=
\{y\in \neg E(a_u,R_u): \neg\exists x\in E(a_u,R_u) \text{ } E(x,y)\}.
\end{equation*}
\indent
Let $\Delta_{u+1}$ be the 
maximal degree in 
$G[R_{u+1}]$. Let $a_{u+1}\in 
R_{u+1}$ such that 
$|E(a_{u+1},R_{u+1})|
=
\Delta_{u+1}$.  
\\\\
\indent
This is the end of the construction of
$(a_u,\Delta_u,R_u)$.
\\\\
\indent
We use the following claim to
 show that $R_{
 \lceil
 t^{\frac{1}{8}}
 \rceil
 }$ is not too small.
\begin{claim}
    For each $u\leq \lceil
    t^{\frac{1}{8}}
    \rceil$, $|R_u|\geq
     |A|
     -
     (\frac{|G|}{t^{d}}
     +
     1
     +
      \frac{|G|}{t^{d}}\cdot
      t^{\frac{1}{4}}
      )\cdot
       u
      $.
\end{claim}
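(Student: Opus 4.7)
The plan is to prove the claim by induction on $u$, with the key input being a uniform bound on how much the construction removes at each step. The base case $u=0$ is immediate because $R_0 = A$ makes the inequality trivial.

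For the inductive step, I would start by noting that the construction of $R_{u+1}$ from $R_u$ removes precisely three disjoint sets: (i) the singleton $\{a_u\}$; (ii) the set $E(a_u, R_u)$ of neighbors of $a_u$; and (iii) the set $C_u := \{y \in \neg E(a_u,R_u) : \exists x \in E(a_u,R_u), E(x,y)\}$ of non-neighbors joined to a neighbor. So it suffices to show $|C_u| + |E(a_u,R_u)| + 1 \leq \frac{|G|}{t^d} + 1 + \frac{|G|}{t^d} \cdot t^{1/4}$, and then chain the inequality $u$ times.

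For (ii), I would use the hypothesis that $|E(a,A)| < \frac{|G|}{t^d}$ for every $a \in A$: since $a_u \in R_u \subseteq A$, we automatically get $\Delta_u = |E(a_u,R_u)| < \frac{|G|}{t^d}$. For (iii), the bound splits along the two cases of the construction. In Case 1 ($\Delta_u \geq \frac{|G|}{t^{2d}}$), the long argument producing $S_1$, $S_2$ and the $I_\alpha$'s is designed precisely to give $|C_u| = |\bigcup_s C_s| \leq \frac{|G|}{t^d} \cdot t^{1/4}$, under the standing assumption that the desired comb of length $\geq t^{1/8}$ does not exist. In Case 2 ($\Delta_u < \frac{|G|}{t^{2d}}$), Fact \ref{comblem} combined with the definition of $W_G$ (forcing the comb alternative to fail) yields $|C_u| \leq \frac{3^{1/2+1}}{3/2 - (3/2)^{1/2}} \cdot \frac{|G|}{t^d}$, which is $\leq \frac{|G|}{t^d} \cdot t^{1/4}$ for $t \geq L_0$ by the first defining inequality of $L_0$.

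The main obstacle, if any, is not in the induction itself (which is straightforward once the per-step bound is in hand) but in making sure that the per-step bound coming from Case 1 is applicable at every $u \leq \lceil t^{1/8} \rceil$. This requires checking that $R_u$ is still a subset of $A$ (so the degree hypothesis applies) and that the standing assumption "the comb does not exist" remains in force at step $u$; both are true by construction, since we never add vertices and the non-existence assumption is global for the whole proof of Lemma \ref{keylem}. With these points verified, the induction closes and gives the stated lower bound on $|R_u|$.
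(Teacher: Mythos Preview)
Your proposal is correct and matches the paper's proof essentially line for line: both argue by induction on $u$, decompose $R_u \setminus R_{u+1}$ into the three pieces $E(a_u,R_u)$, $\{a_u\}$, and $\{y\in \neg E(a_u,R_u): \exists x\in E(a_u,R_u)\ E(x,y)\}$, and bound the third piece by $\frac{|G|}{t^d}\cdot t^{1/4}$ using the Case~1 analysis or the Case~2 bound $\frac{3^{1/2+1}}{3/2-(3/2)^{1/2}}\cdot\frac{|G|}{t^d}\leq \frac{|G|}{t^d}\cdot t^{1/4}$. Your remark that the Case~1 bound is available only under the standing assumption that no comb of length $\geq t^{1/8}$ exists is exactly the point, and the paper uses it in the same way.
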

\begin{proof}
Induction on $u$:\\
\underline{$u=0$}: Since $R_0=A$, $|R_0|\geq|A|$.\\
\underline{$u+1$}: Suppose $|R_u|\geq
     |A|
     -
     (\frac{|G|}{t^{d}}
     +
     1
     +
      \frac{|G|}{t^{d}}\cdot
      t^{\frac{1}{4}}
       )\cdot
       u
      $.
      \\
      \indent
      If $\Delta_u\geq \frac{|G|}{t^{2d}}$, then
      \begin{equation*}
      R_{u+1}=
      \{y\in \neg E(a_u,R_u): \neg\exists x\in E(a_u,R_u) \text{ } E(x,y)\}. 
\end{equation*}
Since 
\begin{equation*}
    R_u
=
E(a_u;R_u)\cup
\{a_u\}\cup
\{y\in \neg E(a_u,R_u): \exists x\in E(a_u,R_u) \text{ } E(x,y)\}
\cup
R_{u+1},
\end{equation*}
we have
\begin{equation*}
    |R_u|
    \leq
    \frac{|G|}{t^{d}}
    +1
    +\frac{|G|}{t^{d}}\cdot
    t^{\frac{1}{4}}
    +
    |R_{u+1}|
    \end{equation*}
    Hence 
    \begin{equation*}
        |R_{u+1}|\geq
        |R_u|
        -
         \frac{|G|}{t^{d}}
    -1
    -\frac{|G|}{t^{d}}\cdot
    t^{\frac{1}{4}}
    .  
    \end{equation*}
    By induction,
    \begin{equation*}
      |R_{u+1}| 
      \geq
         |A|
        -
         (\frac{|G|}{t^{d}}
    +1
    +\frac{|G|}{t^{d}}\cdot
    t^{\frac{1}{4}}
    )
    \cdot (u+1).
    \end{equation*}
    If $ \Delta_u<\frac{|G|}{t^{2d}}$, then
    \begin{equation*}   
R_{u+1}=
\{y\in \neg E(a_u,R_u): \neg\exists x\in E(a_u,R_u) \text{ } E(x,y)\},
\end{equation*}
\begin{equation*}
    R_u
=
E(a_u;R_u)\cup
\{a_u\}\cup
\{y\in \neg E(a_u,R_u): \exists x\in E(a_u,R_u) \text{ } E(x,y)\}
\cup
R_{u+1}.
\end{equation*}
Hence,
\begin{equation*}
    |R_u|
    \leq
    \frac{|G|}{t^{d}}
    +1
    +
    \frac{3^{\frac{1}{2}+1}}{\frac{3}{2} -(\frac{3}{2})^{\frac{1}{2}}}
    \cdot
    \frac{|G|}{t^{d}}
    +
    |R_{u+1}|
     \leq
    \frac{|G|}{t^{d}}
    +1
    +\frac{|G|}{t^{d}}\cdot
    t^{\frac{1}{4}}
    +
    |R_{u+1}|
   .
    \end{equation*}
     By induction,
    \begin{equation*}
      |R_{u+1}| 
      \geq
         |A|
        -
         (\frac{|G|}{t^{d}}
    +1
    +\frac{|G|}{t^{d}}\cdot
    t^{\frac{1}{4}}
    )
    \cdot (u+1).
    \end{equation*}
\end{proof}
It follows that 
\begin{equation*}
R_{\lceil
t^{\frac{1}{8}}
\rceil}
\geq
\frac{|G|}{t^{d}}\cdot
(t-\lceil
t^{\frac{1}{8}}
\rceil
(1
+
\frac{t^d}{|G|}
+
t^{\frac{1}{4}}
))
\geq
\frac{|G|}{t^{d}}\cdot
(t-
2
t^{\frac{1}{8}}
(1
+
\frac{t^d}{|G|}
+
t^{\frac{1}{4}}
))
\geq
\frac{|G|}{t^{d}}\cdot t^{\frac{1}{2}}.
\end{equation*}
Depending on the size of $\Delta_{\lceil
t^{\frac{1}{8}}
\rceil
}$,
we have the following two cases.
\\
\underline{Case (i):
    $\Delta_{
    \lceil
    t^{\frac{1}{8}}
    \rceil}
    \geq
    \frac{|G|}{t^{2d}}$}.
Then $\bigl(E(a_u;R_u):
1\leq u\leq
\lceil
t^{\frac{1}{8}}
\rceil
\bigr)$ is a blockade 
such that for any $ 1\leq 
u<u'\leq 
\lceil
t^{\frac{1}{8}}
\rceil$, 
any $x\in E(a_u;R_u) $, 
$y\in E(a_{u'};R_{u'})$, 
$\neg E(x,y)$ and for all 
$ 1\leq u\leq
\lceil
t^{\frac{1}{8}}
\rceil$,
    $|E(a_u;R_u)|\geq
     \frac{|G|}{t^{2d}}$. Since $G$ is $\tau$-critical, $G$ has an induced cograph of size $\geq 
     (\frac{|G|}{t^{2d}})^{\tau}
     \cdot 
     \lceil
     t^{\frac{1}{8}}
     \rceil
     \geq
     |G|^{\tau}
     \cdot t^{\frac{1}{8}-2d\tau}
     >|G|^{\tau}$,
     a contradiction.
    \\\\
  \underline{Case (ii):
    $\Delta_{
    \lceil
    t^{\frac{1}{8}}
    \rceil
    }
    <
    \frac{|G|}{t^{2d}}$}. In this case, we follow the proof of \cite[3.1]{chudnovsky2021erdos}:\\
    Let
    \begin{equation*}
        W_{u}
        =
        \{y\in \neg E(a_u,R_u): \exists x\in E(a_u,R_u) \text{ } E(x,y)\}.
    \end{equation*}
    Let $u_0$ be the largest such that $\Delta_{u_0}>0$. \\Let $R=
    R_{u_0}\setminus
    (E(a_{u_0},R_{u_0})\cup
    \{a_{u_0}\}\cup
    W_{u_0})$.
Then
\begin{equation*}
    R_{
    \lceil
    t^{\frac{1}{8}}
    \rceil
    }
    =
    \underset{u\geq
    \lceil
    t^{\frac{1}{8}}
    \rceil
    }{\bigcup}
    E(a_u,R_u)
    \cup
     \underset{u\geq
     \lceil
    t^{\frac{1}{8}}
    \rceil
    }{\bigcup}
    \text{ } W_u
    \cup
    \{a_u:u\geq 
    \lceil
    t^{\frac{1}{8}}
    \rceil
    \}
    \cup
    R
\end{equation*}
For $u\geq
\lceil
t^{\frac{1}{8}}
\rceil
$, let $x_u:=\dfrac{|E(a_u,R_u)|}
{|R_{
\lceil
t^{\frac{1}{8}}
\rceil
}|}$ and let $H_u\subseteq E(a_u,R_u)$ be an induced cograph of maximal size.
Since $\Delta_{
\lceil
t^{\frac{1}{8}}
\rceil
}
    <
    \frac{|G|}{t^{2d}}$, for $u\geq 
    \lceil
    t^{\frac{1}{8}}
    \rceil
    $,
    \begin{equation*}
        x_u
        \leq
        \frac{\frac{|G|}{t^{2d}}}{\frac{|G|}
        {t^{d}}\cdot t^{\frac{1}{2}}}
        =
        \frac{t^{d-\frac{1}{2}}}{t^{2d}}
        =
        t^{-d-\frac{1}{2}}.
    \end{equation*}
Since $G$ is $\tau$-critical, for each $u\geq 
\lceil
t^{\frac{1}{8}}
\rceil
$,
\begin{equation*}
    |H_u|
    \geq
    |E(a_u,R_u)|^\tau
    =
    x_u^{\tau}
    \cdot 
    |R_{
    \lceil
    t^{\frac{1}{8}}
    \rceil
    }|^{\tau}
    \geq
     x_u^{\tau}
    \cdot 
    (\frac{|G|}{t^{d}}\cdot t^{\frac{1}{2}})
    ^{\tau}
    =
    x_u^{\tau}
    \cdot 
     (\frac{1}{t^{d-\frac{1}{2}}})
    ^{\tau}
    \cdot
    |G|^{\tau}.
\end{equation*}
Since $\underset{u\geq 
\lceil
t^{\frac{1}{8}}
\rceil
}
{\bigcup}$ $H_u$ is a cograph and $G$ is $\tau$-critical, we have
\begin{equation*}
    |\underset{u\geq
    \lceil
    t^{\frac{1}
    {8}}
    \rceil
    }{\bigcup}\text{ } H_u|
    =
    \underset{u\geq 
    \lceil
    t^{\frac{1}
    {8}}
    \rceil
    }{\sum}\text{ } |H_u|
    <
    |G|^{\tau}. 
\end{equation*}
Hence,
\begin{equation*}
    \underset{u\geq 
    \lceil
    t^{\frac{1}
    {8}}
    \rceil
    }{\sum}\text{ }  x_u^{\tau}
    \cdot 
     (\frac{1}{t^{d-\frac{1}{2}}})
    ^{\tau}
    \leq
     \underset{u\geq 
     \lceil
     t^{\frac{1}
    {8}}
    \rceil
    }{\sum}\text{ } \frac{|H_u|}{|G|^{\tau}}
    =
    \frac{1}{|G|^{\tau}}
    \cdot
    \underset{u\geq 
    \lceil
    t^{\frac{1}
    {8}}
    \rceil
    }{\sum}\text{ } |H_u|
    <1
\end{equation*}
and
\begin{equation*}
     \underset{u\geq 
     \lceil
     t^{\frac{1}
    {8}}
    \rceil
    }{\sum}\text{ }  x_u^{\tau}
    <
    (\frac{1}{t^{d-\frac{1}{2}}})
    ^{-\tau}
\end{equation*}
Since $\{a_u:u\geq 
\lceil
t^{\frac{1}{8}}
\rceil
\}$ does not have an edge, it is a cograph and 
\begin{equation*}
    |\{a_u:u\geq 
    \lceil
    t^{\frac{1}{8}
    }
    \rceil
    \}|
    <
    |G|^\tau.
\end{equation*}
So
\begin{equation*}
    \frac{ |\{a_u:u\geq 
    \lceil
    t^{\frac{1}{8}}
    \rceil
    \}|}
    {|R_{
    \lceil
    t^{\frac{1}{8}}
    \rceil
    }|}
    \leq
       \frac{|\{a_u:u\geq 
       \lceil
       t^{\frac{1}{8}}
       \rceil
       \}|}
    {\frac{|G|}{t^{d}}\cdot t^{\frac{1}{2}}}
    \leq\frac{|G|^\tau}
       {\frac{|G|}{t^{d}}\cdot t^{\frac{1}{2}}}
       =
       |G|^{\tau-1}\cdot
       t^{d-\frac{1}{2}}
\end{equation*}
We estimate $\frac{| \underset{u\geq
    \lceil
    t^{\frac{1}{8}}
    \rceil
    }{\bigcup}
    E(a_u,R_u)|}
    {|R_{
    \lceil
    t^{\frac{1}{8}}
    \rceil
    }|}$:
\begin{equation*}
    \frac{| \underset{u\geq
    \lceil
    t^{\frac{1}{8}}
    \rceil
    }{\bigcup}
    E(a_u,R_u)|}{|R_{
    \lceil
    t^{\frac{1}{8}}
    \rceil
    }|}
    =
    \frac{ \underset{u\geq 
    \lceil
    t^{\frac{1}
    {8}}
    \rceil
    }{\sum} \text{ }
    |E(a_u,R_u)|}
    {|R_{
    \lceil
    t^{\frac{1}{8}}
    \rceil
    }|}
    =
    \underset{u\geq 
    \lceil
    t^{\frac{1}
    {8}}
    \rceil
    }{\sum}\text{ }
    \frac{|E(a_u,R_u)|}
    {|R_{
    \lceil
    t^{\frac{1}{8}}
    \rceil
    }|}
    =
    \underset{u\geq 
    \lceil
    t^{\frac{1}
    {8}}
    \rceil
    }{\sum} \text{ } x_u
    =
\end{equation*}
\begin{equation*}
    \underset{u\geq 
    \lceil
    t^{\frac{1}
    {8}}
    \rceil
    }{\sum} \text{ } x_u^{\tau}
    \cdot
    x_u^{1-\tau}
    \leq
    \underset{u\geq 
    \lceil
    t^{\frac{1}
    {8}}
    \rceil
    }{\sum} \text{ } x_u^{\tau}
    \cdot
     t^{(-d-\frac{1}{2})(1-\tau)} 
\end{equation*}
\begin{equation*} 
\leq
      (\frac{1}{t^{d-\frac{1}{2}}})
    ^{-\tau}
      \cdot
     t^{-d-\frac{1}{2}+(d+\frac{1}{2})\tau}
     =
         t^{-d-\frac{1}{2}+2d\tau}
\end{equation*}
We bound $\frac{|   \underset{u\geq
    \lceil
    t^{\frac{1}{8}}
    \rceil
    }{\bigcup}
    \text{ } W_u|}
    {|R_{
    \lceil
    t^{\frac{1}{8}}
    \rceil
    }|}$:
\begin{equation*}
    \frac{|   \underset{u\geq
    \lceil
    t^{\frac{1}{8}}
    \rceil
    }{\bigcup}
    \text{ } W_u|}
    {|R_{
    \lceil
    t^{\frac{1}{8}}
    \rceil
    }|}
    =
    \frac{ \underset{u\geq
    \lceil
    t^{\frac{1}{8}}
    \rceil
    }{\sum}
    \text{ } |W_u|}
    {|R_{
    \lceil
    t^{\frac{1}{8}}
    \rceil
    }|}
    =
      \underset{u\geq
    \lceil
    t^{\frac{1}{8}}
    \rceil
    }{\sum}\text{ }
    \frac{ |W_u|}
    {|R_{
    \lceil
    t^{\frac{1}{8}}
    \rceil
    }|}\leq
\end{equation*}
\begin{equation*}
         \underset{u\geq
    \lceil
    t^{\frac{1}{8}}
    \rceil
    }{\sum}  \text{ }
     \frac{3^{\frac{1}{2}+1}}{\frac{3}{2} -(\frac{3}{2})^{\frac{1}{2}}}(\frac{|G|}{|R_{
     \lceil
     t^{\frac{1}{8}}
     \rceil
     }|})^{\frac{1}{2}}
     (\frac{\Delta_u}{|R_{
     \lceil
     t^{\frac{1}{8}}
     \rceil
     }|})^{\frac{1}{2}}\leq
\end{equation*}
\begin{equation*}
     \frac{3^{\frac{1}{2}+1}}{\frac{3}{2} -(\frac{3}{2})^{\frac{1}{2}}}
     \cdot
     (\frac{|G|}{\frac{|G|}{t^{d}}\cdot t^{\frac{1}{2}}})^{\frac{1}{2}}
     \underset{u\geq
    \lceil
    t^{\frac{1}{8}}
    \rceil
    }{\sum}
    \text{ } x_u^{\frac{1}{2}}=
\end{equation*}
\begin{equation*}
     \frac{3^{\frac{1}{2}+1}}{\frac{3}{2} -(\frac{3}{2})^{\frac{1}{2}}}
     \cdot
     t^{\frac{d}{2}-\frac{1}{4}}
     \underset{u\geq
    \lceil
    t^{\frac{1}{8}}
    \rceil
    }{\sum}
    \text{ } x_u^{\tau} x_u^{\frac{1}{2}-\tau}\leq
\end{equation*}
\begin{equation*}
     \frac{3^{\frac{1}{2}+1}}{\frac{3}{2} -(\frac{3}{2})^{\frac{1}{2}}}
     \cdot
     t^{\frac{d}{2}-\frac{1}{4}}
     \underset{u\geq
    \lceil
    t^{\frac{1}{8}}
    \rceil
    }{\sum}
    \text{ } x_u^{\tau}
    \cdot
    (t^{-d-\frac{1}{2}})^{\frac{1}{2}-\tau}\leq
\end{equation*}
\begin{equation*}
      \frac{3^{\frac{1}{2}+1}}{\frac{3}{2} -(\frac{3}{2})^{\frac{1}{2}}}
     \cdot
     t^{-\frac{1}{2}+
     (d+\frac{1}{2})
     \tau}
     \cdot
      (\frac{1}{t^{d-\frac{1}{2}}})
    ^{-\tau}=
\end{equation*}
\begin{equation*}
      \frac{3^{\frac{1}{2}+1}}{\frac{3}{2} -(\frac{3}{2})^{\frac{1}{2}}}
    \cdot
     t^{-\frac{1}{2}+
     2d\tau}.
\end{equation*}
Since $u_0$ is the largest such that $\Delta_{u_0}>0$, $R$ has no edge. Hence, $|R|<|G|^{\tau}$.
It follows that 
\begin{equation*}
    1=
    \frac{|R_{
    \lceil
    t^{\frac{1}{8}}
    \rceil
    }|}{|R_{
    \lceil
    t^{\frac{1}{8}}
    \rceil
    }|}
    =
   \frac{|\underset{u\geq
    \lceil
    t^{\frac{1}{8}}
    \rceil
    }{\bigcup}
    E(a_u,R_u)
    \cup
     \underset{u\geq
    \lceil
    t^{\frac{1}{8}}
    \rceil
    }{\bigcup}
    \text{ } W_u
    \cup
    \{a_u:u\geq \lceil
    t^{\frac{1}{8}}
    \rceil
    \}
    \cup
    R|}
    {|R_{
    \lceil
    t^{\frac{1}{8}}
    \rceil
    }|}
    \leq
\end{equation*}
\begin{equation*}
     t^{-d-\frac{1}{2}+2d\tau}
     +
     \frac{3^{\frac{1}{2}+1}}{\frac{3}{2} -(\frac{3}{2})^{\frac{1}{2}}}
     \cdot
     t^{-\frac{1}{2}+
     2d\tau}
     +
      |G|^{\tau-1}
      \cdot
       t^{d-\frac{1}{2}}
       +
       |G|^{\tau-1}
       \cdot
       t^{d-\frac{1}{2}}
       \leq 
\end{equation*}
\begin{equation*}
     t^{-d-\frac{1}{2}+2d\tau}
     +
     \frac{3^{\frac{1}{2}+1}}{\frac{3}{2} -(\frac{3}{2})^{\frac{1}{2}}}
    \cdot
     t^{-\frac{1}{2}+
     2d\tau}
     +
      |G|
      ^{\tau-\frac{1}{2d}}
       \cdot
       2^{d+\frac{1}{2}}
       \leq
\end{equation*}
\begin{equation*}
         t^{-d-\frac{1}{2}+2d\tau}
     +
     \frac{3^{\frac{1}{2}+1}}{\frac{3}{2} -(\frac{3}{2})^{\frac{1}{2}}}
    \cdot
     t^{-\frac{1}{2}+
     2d\tau}
     +
    |G|
      ^{-(d+1)\tau}
       \cdot 
       2^{d+\frac{1}{2}}
\end{equation*}
which is $<1$ by the choice 
of $\tau$ and $t$ and by the 
fact that $|G|^{\tau}\geq 2$ 
since a graph with two 
vertices is always a cograph. 
This gives a contradiction.
Hence, there exists a comb as 
described in the statement.
\end{proof}

\bibliographystyle{alpha}
\bibliography{Ref}

\begin{thebibliography}{CSSS21}

\bibitem[CS18a]{chernikov2018note}
Artem Chernikov and Sergei Starchenko.
\newblock A note on the erd{\H{o}}s-hajnal property for stable graphs.
\newblock {\em Proceedings of the American Mathematical Society}, 146(2):785--790, 2018.

\bibitem[CS18b]{chernikov2018regularity}
Artem Chernikov and Sergei Starchenko.
\newblock Regularity lemma for distal structures.
\newblock {\em Journal of the European Mathematical Society}, 20(10):2437--2466, 2018.

\bibitem[CSSS20]{chudnovsky2020pure}
Maria Chudnovsky, Alex Scott, Paul Seymour, and Sophie Spirkl.
\newblock Pure pairs. i. trees and linear anticomplete pairs.
\newblock {\em Advances in Mathematics}, 375:107396, 2020.

\bibitem[CSSS21]{chudnovsky2021erdos}
Maria Chudnovsky, Alex Scott, Paul Seymour, and Sophie Spirkl.
\newblock Erdos-hajnal for graphs with no 5-hole.
\newblock {\em arXiv preprint arXiv:2102.04994}, 2021.

\bibitem[DGL09]{https://doi.org/10.48550/arxiv.0910.3189}
Alfred Dolich, John Goodrick, and David Lippel.
\newblock Dp-minimality: basic facts and examples, 2009.

\bibitem[EH89]{erdos1989ramsey}
Paul Erd{\"o}s and Andr{\'a}s Hajnal.
\newblock Ramsey-type theorems.
\newblock {\em Discrete Applied Mathematics}, 25(1-2):37--52, 1989.

\bibitem[FPS19]{fox2019erdHos}
Jacob Fox, J{\'a}nos Pach, and Andrew Suk.
\newblock Erd{\H{o}}s--hajnal conjecture for graphs with bounded vc-dimension.
\newblock {\em Discrete \& Computational Geometry}, 61:809--829, 2019.

\bibitem[MS14]{malliaris2014regularity}
Maryanthe Malliaris and Saharon Shelah.
\newblock Regularity lemmas for stable graphs.
\newblock {\em Transactions of the American Mathematical Society}, 366(3):1551--1585, 2014.

\bibitem[PT21]{PACH202121}
János Pach and István Tomon.
\newblock Erdős-hajnal-type results for monotone paths.
\newblock {\em Journal of Combinatorial Theory, Series B}, 151:21--37, 2021.

\bibitem[Sim15]{simon2015guide}
Pierre Simon.
\newblock {\em A guide to NIP theories}.
\newblock Cambridge University Press, 2015.

\bibitem[Tom23]{tomon2023string}
Istv{\'a}n Tomon.
\newblock String graphs have the er{\H{o}}ds--hajnal property.
\newblock {\em Journal of the European Mathematical Society}, 2023.

\end{thebibliography}
\end{document}